\documentclass[11pt]{article}

\overfullrule = 0pt
\usepackage{amsmath,amssymb,amsthm,amscd,epsfig}
\topmargin -0.5in
\textheight 9.0in

\textwidth 6.5in

\oddsidemargin 0.0in

\evensidemargin 0.0in

\newcommand{\vp}{\varepsilon}

\newcommand{\n}{\noindent}

\theoremstyle{plain}

\newtheorem{thm}{Theorem}

\newtheorem*{unthm}{Theorem}

\newtheorem{cor}{Corollary}

\newtheorem*{unpro}{Proposition}

\theoremstyle{definition}

\newtheorem{defn}{Definition}

\theoremstyle{remark}

\begin{document}

\title{Inverse limits and statistical properties for  chaotic implicitly defined economic models}

\author{Eugen Mihailescu}
\date{}
\maketitle

\begin{abstract}

In this paper we study the dynamics and ergodic theory of certain
economic models which are implicitly defined. We consider
1-dimensional and 2-dimensional overlapping generations models, a
cash-in-advance model, heterogeneous markets and a cobweb model
with adaptive adjustment. We consider the inverse limit spaces of
certain chaotic invariant fractal sets and their
metric, ergodic and stability properties. The inverse limits give
the set of intertemporal perfect foresight equilibria for the
economic problem considered. First we show that the inverse limits
of these models are stable under perturbations. We prove that the
inverse limits are expansive and have specification property. We
then employ utility functions on inverse limits in our case. We
give two ways to rank such utility functions. First, when
perturbing certain dynamical systems, we rank utility functions in
terms of their \textit{average values} with respect to invariant
probability measures on inverse limits, especially with respect to
measures of maximal entropy. For families of certain unimodal maps
we can adjust both the discount factor and the system parameters
in order to obtain maximal average value of the utility. The
second way to rank utility functions (for more general maps on
hyperbolic sets) will be to use equilibrium measures of these
utility functions on inverse limits; they optimize
 average values of utility functions while \textit{at the same time} keeping the
disorder in the system as low as possible in the long run.
\end{abstract}

\textbf{Mathematics Subject Classification 2000:} 37D20, 37C40,
37D35, 37N40, 91B55, 91B82.

\textbf{Keywords:} Chaotic maps, inverse limits, non-invertible
economic dynamics, overlapping generations model, adaptive
adjustments cobweb model, utility functions, invariant measures,
homoclinic orbits, entropy.

\section{Non-invertible economic models. Outline of main results}

Non-invertible dynamical systems have found many applications in various economic models, in which the equilibrium at
 time $t+1$ is not uniquely defined by the one at time $t$; instead there may exist several such optimal states at time $t+1$. We refer to these systems as \textit{implicitly defined economic systems}.

In this paper we study the dynamical and ergodic properties of
such systems which present chaotic behavior on certain invariant
sets. Among the economic systems with non-invertible (or
\textit{backward}) dynamics there are the 1-dimensional and the
2-dimensional overlapping generations models, the cash-in-advance
model, the cobweb model with adaptive adjustment and a class of
models representing heterogeneous market agents with adaptively
rational rules. The common feature of all these models is that
they are given by non-invertible dynamical systems and present
chaotic behavior. \ In some of these models, we have
\textit{hyperbolic horseshoes} (as in the cobweb model, see
\cite{O}, \cite{Z}), in others \textit{transversal
homoclinic/heteroclinic orbits from saddle points} (see the
heterogeneous market model, \cite{FG}), or yet in others there exist
\textit{snap-back repellers}, as in the 1-dimensional and
2-dimensional overlapping generations models for certain offer
curves (see \cite{GHT}). Also in the case of unimodal maps
modelling some overlapping generations scenarios,
 we have chaotic behavior on \textit{repelling invariant Cantor sets} (as for the logistic map
 $F_\nu$ with $\nu>4$, see \cite{Med}, \cite{R}).

For such noninvertible dynamical systems, the inverse limits are
very important since they provide a natural framework in which the
system "unfolds" and they give sequences of intertemporal
equilibria. Also as we will see they are important since many
results from the theory of expansive homeomorphisms can be applied
on inverse limits, in particular those about lifts of invariant
measures. \textit{Equilibrium measures} of Holder potentials are significant examples of invariant measures and they are very important for the evolution of the system.
For instance, the measure of maximal entropy gives the
distribution on the phase space associated to "maximal chaos". The
Sinai-Ruelle-Bowen measure (see \cite{ER}, \cite{Y}) on a
hyperbolic attractor or of an Anosov diffeomorphism is again an
equilibrium measure (for the unstable potential), and gives the
limiting distribution of the forward iterates of Lebesgue-almost
all points in a neighbourhood of the attractor. Thus it is a
\textit{natural measure} or \textit{physical measure} of the
system since it can be actually observed in experiments/computer
simulations.

Another important feature for economic dynamical systems is that
of \textit{stability}. We are interested if a certain model is
\textit{stable} on invariant sets at small fluctuations. In our
case, since we work with infinite sequences of intertemporal
equilibria, one would like to have stability of the shifts on the
inverse limit spaces.

The standard method of studying evolution of a system in economics
is to use random (stochastic) dynamical systems which transfers
exogeneous random "shocks" to the system. However a system which
presents chaotic behavior, has also complicated
\textit{endogeneous} fluctuations.

Also given an implicitly defined economic system with its inverse
limit of intertemporal equilibria and an utility function on these
equilibria, a central goverment/central bank may want to find a
\textit{distribution on the set of intertemporal equilibria} which
maximizes the average value of the utility, but at the same
time keeps the disorder in the system as little as possible in the
long run. If $W(\cdot)$ is a utility function on $\hat \Lambda$
and $\hat \mu$ is a $\hat f$-invariant measure on $\hat \Lambda$
with measure-theoretic entropy $h_{\hat \mu}$, then the maximum in $\hat \mu$ of
the expression $$ \int_{\hat \Lambda} W(\hat x) d\hat \mu(\hat x)
+ h_{\hat \mu} $$ is attained for the \textit{equilibrium measure}
$\hat \mu_W$ of $W$ (see for instance \cite{KH} for the
Variational Principle for Topological Pressure). So the
equilibrium measures may provide a good way to do that, and we
will be able to give geometric and statistical properties of these
measures. One of the defining characteristics of chaos is
sensitive dependence on initial conditions, that is, even if we
start with two initial states that are quite close to each other,
still over time, they may become very far from each other. The
equilibrium measures will permit us to estimate the
\textit{measures of sets of points which stay close} up to $n$
iterations.

We will use the notion of \textbf{chaotic map} several times. We
say that $f$ is \textbf{chaotic} on an invariant set $X$ if $f$ is
topologically transitive on $X$ and $f$ has sensitive dependence
on initial conditions (see for eg. \cite{R}).

\

The \textbf{main sections and results} of the paper are the following:

First we review some important economic models with non-invertible dynamics, like the overlapping generations model,
the cash-in-advance model, the cobweb model with adaptive adjustments and the heterogeneous market model.
A common feature of all these models is the backward dynamics born out of  implicitly defined difference equations.
Also in many instances we have chaotic invariant sets for these models, given by horseshoes, or by snap-back repellers,
or by transverse homoclinic orbits. Therefore we have hyperbolicity on certain invariant sets or conjugation of an iterate
 with the shift on some 1-sided symbol space $\Sigma_m^+$.

In Theorem \ref{perturbation} we will prove that by slightly
\textbf{perturbing} the parameters of these difference equation, we 
obtain again the same dynamical properties, for instance density
of periodic points, topological transitivity, etc.

We study then \textbf{utility functions on inverse limits} for
noninvertible economic systems. Invariant measures for
 a dynamical system are very important since they preserve the ergodic and dynamical properties of the system in time;
 in fact from any measure one can form canonically an invariant measure by a well-known procedure (see for eg. \cite{KH}).
We will give \textbf{two options to rank utility functions}: \ one
using average values with respect to invariant probability
borelian measures, especially measures of maximal entropy (which
best describe the chaotic distribution of the system over time),
and another by using equilibrium measures of the utility
functions, which give the best average value while keeping the
system as under control as possible.

\textbf{The first option} is given in Theorem \ref{inv} where we rank utility functions of systems given by certain
 unimodal maps according to their average values with respect to invariant borelian measures $\hat \mu$ on the inverse limits,
 especially with respect to measures of maximal entropy.
For certain expanding systems, namely for logistic maps $F_\nu,
\nu>4$ we are able to compare in Corollary \ref{log} the
\textbf{average utility values} with respect to the corresponding
measures of maximal entropy when perturbing both the discount
factor $\beta$ of the utility $W$, as well as the system parameter $\nu >4$.

Then in Theorem \ref{exp} we will prove that the inverse limits of
certain invariant sets for these models are \textbf{expansive},
and have also the \textbf{specification property}. This will allow
us in Theorem \ref{eq} to show that given a Holder continuous
potential, we can associate to it a special probability measure
called an \textbf{equilibrium measure} (see \cite{KH}, \cite{Bo}
for definitions). This equilibrium measure can be estimated
precisely, on sets of points remaining close to each other up to a
certain positive iterate (i.e on Bowen balls). We can apply these
results to utility functions from economics, which are shown to be
 Holder potentials.

\textbf{The second option to rank utility functions} we consider,
 is to maximize the \textbf{ratio} between the exponential of the
average value with respect to $\hat \mu$ and the measure $\hat \mu$ of the set of
points from the inverse limit that remain close up to a certain number of iterates. In
this way we find the distribution $\hat \mu$ which maximizes the average
utility value but \textbf{at the same time} keeps the "disorder"
of the system (i.e the entropy of $\hat \mu$) as small as possible
(equivalently the measure of the set of points which shadow $x$ up
to order $n$, is as large as possible).   Equilibrium
measures of Holder potentials on the inverse limit have also other
various statistical properties, like \textit{Exponential Decay of
Correlations} on Holder observables (see \cite{Bo}). Then in
Theorem \ref{approx} we approximate the average value of the
utility on inverse limits with those of simpler potentials.

\

Let us remind now several examples of economic dynamical systems,
which are non-invertible:

\textbf{1. The 1-dimensional overlapping generations model.}

This model was proposed initially by Grandmont (\cite{G}) and
studied by various authors (\cite{GHT}, \cite{KS}, \cite{Med}, \cite{Med2}). In
this model we have an economy with constant population divided
into young and old agents, and with a household sector and a
production sector. A typical agent lives for the 2 periods, works
when young and consumes when old and he receives a salary for his
work in the first period. There is a perishable consumption good
and one unit of it is produced with one unit of labour. If money
is supplied in a fixed amount, say $M$, then we have at time $t$,
that $w_t \ell_t = M$, where $w_t$ is the wage rate and $\ell_t$
is the labour. At the same time, $M = p_{t+1} c_{t+1}$ where
$p_{t+1}$ is the expected price of the consumption good at time
$t+1$ and $c_{t+1}$ is the amount of future consumption.  Now
agents have an utility function of type $U = V_1(\ell_* - \ell_t)
+ V_2(c_{t+1})$ where $\ell_*$ is the fixed labour endowment of
the young and $\ell_* - \ell_t$ is the leisure at time $t$.
Agents would like to have both as much leisure currently as well
as consumption when old. Thus under the budget constraint from
above $M = w_y \ell_t = p_{t+1} c_{t+1}$ the optimization problem
above gives, by the method of Lagrange multipliers, an implicit
difference equation: \ $\ell_t = \chi(c_{t+1})$, where
$\chi(\cdot)$ is the \textit{offer curve}. Since by assumption one
unit of labour produces one unit of consumption good, we have
$\ell_t = c_t$, hence by denoting $\ell_t$ by $x_t$, we obtain
\begin{equation}\label{1OLG}
y_t = \chi(y_{t+1})
\end{equation}
As Grandmont showed in \cite{G}, in many cases the offer curve is
not given by a monotonic/injective function, making (\ref{1OLG}) a
non-invertible difference equation. Thus for a level of
consumption at time $t$ there may be several levels of optimal
consumption at time $t+1$. In this case we study the
 \textit{backward dynamics} of the system, i.e the sequences of future consumption levels allowed by
 (\ref{1OLG}).\
The backward dynamics given by relation (\ref{1OLG}) is chaotic in
certain cases. For instance a condition was given by Mitra and
extended in \cite{GHT} in order to guarantee the existence of a
\textit{snap-back repeller}. Let us first recall the definition of a
snap-back repeller (see \cite{Mar}, \cite{Mar2}), and that of the
one-sided shift:

\begin{defn}\label{snap}
Let a smooth function $f:U \to U$, where $U$ is an open set in $\mathbb R^n, n \ge 1$.
 Suppose that $p$ is a fixed repelling point of $f$, i.e all the eigenvalues of $Df(p)$ are larger than 1 in absolute
  value, and assume that there exists another point $x_0 \ne p$ in a repelling neighbourhood of $p$, so that
   $f^m(x_0)= p$ and $\text{det}Df(f^i(x_0)) \ne 0, 1 \le i \le m$. Then $p$ is called a \textit{snap-back
   repeller} of $f$.
\end{defn}

\begin{defn}\label{shift}
We will denote by $\Sigma_m^+$ (where $m \ge 2$) the space of
1-sided infinite sequences formed with $m$ symbols, i.e
$\Sigma_m^+ = \{(i_0, i_1, i_2, \ldots), i_j \in \{1, \ldots, m\},
j \ge 0\}$.  We have the \textit{shift map} on $\Sigma_m^+$,
namely $\sigma_m: \Sigma_m^+ \to \Sigma_m^+, \ \sigma_m(i_0, i_1,
\ldots) = (i_1, i_2, \ldots)$. The space $\Sigma_m^+$ is  compact
with the product topology.
\end{defn}

 Snap-back
repellers appear only for non-invertible maps, and are
important since they are similar to transverse homoclinic orbits (see \cite{KH} for eg.)
Marotto proved the following:

\begin{unthm}[Marotto]\label{marotto}
Let $p$ a snap-back repeller for a smooth non-invertible
map $f$ and $\mathcal{O}(x_0)$ a homoclinic orbit of
$x_0$ towards the repelling fixed point $p$, i.e $\mathcal{O}(x_0)
= \{\ldots, x_{-i}, \ldots, x_0, f(x_0), \ldots, p\}$, with
$f(x_{-i}) = x_{-i+1}, i \ge 1$. Then in any neighbourhood of the
orbit $\mathcal{O}(x_0)$ there exists a Cantor set $\Lambda$ on
which some iterate of $f$ is topologically conjugated to the shift
on the space $\Sigma_2^+$ of one-sided infinite sequences on 2
symbols. Hence $f$ itself is chaotic on $\Lambda$.
\end{unthm}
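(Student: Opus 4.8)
The plan is to produce, for a suitable iterate $F=f^{N}$, a Smale‑type horseshoe supported in a neighbourhood of $\mathcal{O}(x_0)$ and coded by $\Sigma_2^+$: one of the two symbols corresponds to $F$ keeping the orbit near $p$, the other to $F$ performing a single excursion along $\mathcal{O}(x_0)$. It is precisely the non‑invertibility of $f$ that lets these two alternatives coexist near the same homoclinic orbit.

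First I would introduce an adapted norm near $p$. Since every eigenvalue of $Df(p)$ has modulus larger than $1$, there are a norm on $\mathbb R^{n}$, constants $M\ge s>1$ and a radius $r_0>0$ such that $s\|u-v\|\le\|f(u)-f(v)\|\le M\|u-v\|$ for all $u,v\in B:=B_{r_0}(p)$; in particular $f|_B$ is a diffeomorphism onto a set containing $\overline B$, with a well‑defined contracting inverse branch $\tau\colon\overline B\to B$ satisfying $\tau(p)=p$ and $\|\tau u-\tau v\|\le s^{-1}\|u-v\|$. Since $x_0$ lies in a repelling neighbourhood of $p$ and its backward homoclinic orbit is the one that stays near $p$, one has $x_{-i}=\tau^{i}(x_0)\to p$. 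By the non‑degeneracy hypothesis $\det Df(f^{i}(x_0))\ne 0$ for $1\le i\le m$, the map $f^{m}$ admits a smooth local inverse branch $h$, defined on a small ball $W$ around $p$, with $h(p)=x_0$ — a composition of local inverses of $f$ along the finite excursion $x_1,\dots,x_{m-1},x_m=p$.

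Next I would build the two pieces. Fix a large integer $N$, set $q=\tau^{N-m}(x_0)\in B$ (close to $p$, since $\|q-p\|\le s^{-(N-m)}\|x_0-p\|$), take $V_0$ a small ball around $p$, and put $V_1=\tau^{N-m}(h(W))$, a small neighbourhood of $q$. By construction $F=f^{N}$ restricts to a diffeomorphism of $V_0$, and of $V_1$, onto a neighbourhood of $p$ — on $V_1$ one undoes the last $m$ maps by $h$ and the preceding $N-m$ maps by $\tau$ — and along the stretches that stay near $p$ it expands by at least $s^{N-m}$, hence by a large factor once $N$ is large. Choosing the radii with the right relative orders of magnitude ($V_1$ of size comparable to $\|q-p\|$ and $V_0$ much smaller) one obtains $V_0\cap V_1=\emptyset$ together with $F(V_0)\supseteq V_0\cup V_1$ and $F(V_1)\supseteq V_0\cup V_1$. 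I expect this step to be the main obstacle: because $p$ may be anisotropically expanding, round neighbourhoods need not simultaneously be disjoint, be carried by $F$ across both pieces, and have $F$ injective on them; one must pass to eigencoordinates of $Df(p)$ and take $V_0,V_1$ suitably elongated along the most strongly expanding directions. This is exactly the delicate point in Marotto's original argument, and the place where the precise form of the snap‑back hypothesis is used.

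Granting such $V_0,V_1$, the rest is the classical symbolic coding. The two branches $g_i=(F|_{V_i})^{-1}$, $i=0,1$, are contractions of $V_0\cup V_1$ into itself whose images have uniformly shrinking diameters, so for each $\underline a=(a_0,a_1,\dots)\in\Sigma_2^+$ the nested compacta $g_{a_0}g_{a_1}\cdots g_{a_j}(V_0\cup V_1)$ shrink to one point; with $\Lambda:=\bigcap_{j\ge 0}F^{-j}(V_0\cup V_1)$, the itinerary map $\Lambda\to\Sigma_2^+$ is a homeomorphism conjugating $F|_\Lambda$ with $\sigma_2$, and $\Lambda\cong\Sigma_2^+$ is a Cantor set. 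Since $\sigma_2$ is topologically transitive and has sensitive dependence (indeed it is positively expansive with topological entropy $\log 2$), the same holds for $F|_\Lambda$. Moreover every orbit in $\Lambda$ is a concatenation of long sojourns near $p$ with single excursions shadowing the finite segment $q\to\cdots\to x_0\to\cdots\to p$ of $\mathcal{O}(x_0)$, so by shrinking $V_0,V_1$ beforehand one forces $\Lambda$, and its finitely many forward iterates, into any prescribed neighbourhood of $\mathcal{O}(x_0)$. Finally, chaos of $F=f^{N}$ on $\Lambda$ yields chaos of $f$: sensitive dependence transfers verbatim (inspect times divisible by $N$), and topological transitivity transfers to the $f$‑invariant compact set $\Lambda\cup f(\Lambda)\cup\cdots\cup f^{N-1}(\Lambda)$, which still lies in the chosen neighbourhood of $\mathcal{O}(x_0)$.
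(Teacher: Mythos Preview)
The paper does not actually prove this statement: it is quoted as Marotto's theorem, with attribution to \cite{Mar} and \cite{Mar2}, and no proof is given. So there is no ``paper's own proof'' to compare your attempt against.

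That said, your outline is essentially the standard horseshoe construction behind Marotto's result: one contracting inverse branch $\tau$ near the repelling fixed point $p$, a second branch obtained by composing local inverses of $f$ along the finite non-degenerate excursion $x_0\mapsto\cdots\mapsto p$, and then an iterated-function-system/itinerary argument on two disjoint pieces $V_0,V_1$ with $F(V_i)\supseteq V_0\cup V_1$. You also correctly flag the genuine subtlety --- the anisotropic expansion at $p$, which was precisely the gap in Marotto's 1978 argument and the reason for the 2005 correction \cite{Mar2}. Your sketch is honest about not fully resolving that step; a complete proof here would either impose the stronger hypothesis that the expansion is uniform on a whole ball (as in the corrected definition) or carry out the eigencoordinate construction you allude to. Modulo that acknowledged gap, the argument is sound and matches the approach in the cited references.
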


For many economic models, the offer curve $\chi(\cdot)$ is given by a smooth (or piecewise smooth) \textit{unimodal map}
 (see \cite{G}, \cite{GHT}, \cite{Med}, \cite{Med2}). We shall recall some of their properties; for more information, see \cite{KS}, \cite{Med}, \cite{Med2},  etc.

A continuous map $f:[a, b] \to [a, b]$ is called \textit{unimodal}
if $f$ is not monotone and there exists a point $c \in (a, b)$ so
that $f(c) \in [a, b]$ and $f$ is increasing on $[a, c)$ and
decreasing on $(c, b]$. Type A unimodal maps are unimodal maps
satisfying $f(a) = a$ and $f(c) < b$. Type B unimodal maps are
those satisfying $f(a) > a$ and $f(b) = a$. Type C maps are of the
form $f:[a, b] \to \mathbb R$ s. t $f$ is not monotone, $f(a)=f(b)
= a$ and $f(c) > b$. Type C maps are not strictly speaking
unimodal as the map $f$ does not take necessarily values inside
the same interval $[a, b]$, but in general they are considered
"unimodal" too. \ In certain cases when the offer curve $\chi$ is
unimodal, one can find snap-back repellers (see \cite{GHT}):

\begin{unpro}
Let $\chi: I \to I$ be a unimodal smooth function on the unit interval, with a maximum point at $x_m$ and a fixed point at $x^*$. If $\chi^3(x_m) < x^*$, then $x^*$ is a snap-back repeller and thus there exists an invariant Cantor set $\Lambda \subset I$ on which an iterate of $\chi$ is topologically conjugate to the shift; so $\chi$ is chaotic and has positive topological entropy.
\end{unpro}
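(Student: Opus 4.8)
The strategy is to reduce the statement to Marotto's theorem by verifying that the hypothesis $\chi^3(x_m) < x^*$ forces $x^*$ to be a snap-back repeller in the sense of Definition \ref{snap}. First I would observe that because $\chi$ is unimodal with maximum at $x_m$, the fixed point $x^*$ must lie on the decreasing branch $(x_m, b]$; indeed, on the increasing branch any fixed point would be attracting or neutral, but the condition $\chi^3(x_m) < x^*$ together with $\chi(x_m) \ge x^*$ (the maximum dominates the fixed point) forces $\chi'(x^*) < -1$, so $x^*$ is repelling. This gives the first requirement of Definition \ref{snap} with $n = 1$: $|D\chi(x^*)| > 1$.

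\smallskip

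Next I would construct the homoclinic point $x_0 \ne x^*$ with $\chi^m(x_0) = x^*$ lying inside a repelling neighbourhood of $x^*$. The idea is to chase the interval $[\,\chi^3(x_m),\, \chi(x_m)\,]$ (or rather the images of a neighbourhood of $x_m$) and use the intermediate value theorem: since $\chi^3(x_m) < x^*$ while $x^* \le \chi(x_m) = \chi^2(\text{something near }x_m)$, the orbit of a suitable preimage structure around the critical point $x_m$ lands exactly on $x^*$ after finitely many steps. Concretely, one picks a small repelling neighbourhood $V$ of $x^*$ on which $|\chi'| > 1$, shows $\chi^k(x_m)$ enters $V$ for some $k$, then pulls back through the critical point: there is a preimage $x_0$ near $x_m$ with $\chi(x_0)$ chosen so that after a bounded number of iterates the orbit hits $x^*$ precisely. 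One must also check the nondegeneracy condition $\chi'(\chi^i(x_0)) \ne 0$ for the relevant $i$, which holds as long as none of the finitely many iterates $\chi^i(x_0)$ equals the critical point $x_m$ — and this can be arranged by perturbing the choice of $x_0$ slightly, since the bad set is finite.

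\smallskip

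Having produced a genuine snap-back repeller $x^*$ for $\chi$, I would then invoke Marotto's theorem (the \textbf{unthm} labelled \ref{marotto} above) directly: in any neighbourhood of the homoclinic orbit $\mathcal{O}(x_0)$ there is a Cantor set $\Lambda \subset I$ on which some iterate $\chi^N$ is topologically conjugate to the one-sided shift $\sigma_2$ on $\Sigma_2^+$. Since $\sigma_2$ has topological entropy $\log 2 > 0$ and topological entropy scales as $h_{\mathrm{top}}(\chi^N|_\Lambda) = N\, h_{\mathrm{top}}(\chi|_\Lambda)$, we conclude $h_{\mathrm{top}}(\chi) \ge h_{\mathrm{top}}(\chi|_\Lambda) = \tfrac{1}{N}\log 2 > 0$. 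Transitivity and sensitive dependence of $\sigma_2$ transfer through the conjugacy to $\chi^N|_\Lambda$, and hence $\chi$ is chaotic on $\Lambda$ in the sense defined in the paper.

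\smallskip

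The main obstacle is the middle step: turning the purely scalar inequality $\chi^3(x_m) < x^*$ into the combinatorial/topological data of a homoclinic orbit landing exactly on $x^*$, while simultaneously dodging the critical point $x_m$ at every intermediate iterate so that the derivative nondegeneracy in Definition \ref{snap} holds. This is essentially a careful one-dimensional covering argument (of the kind used to show that a unimodal map with $\chi^3(x_m)$ small has an invariant Cantor set), and the only subtlety is bookkeeping the finitely many excluded values and confirming that the preimage $x_0$ can be taken distinct from $x^*$ and genuinely inside a repelling neighbourhood rather than merely in $I$.
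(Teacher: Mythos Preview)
The paper does not actually supply a proof of this proposition: it is quoted verbatim as a result from \cite{GHT}, introduced by ``one can find snap-back repellers (see \cite{GHT}):'' and followed immediately by the statement with no argument. So there is no proof in the paper to compare against; your proposal is an attempt to reconstruct what the cited source does.

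That said, your overall architecture is the right one and matches how such results are obtained in the literature: verify the two ingredients of Definition~\ref{snap} and then invoke Marotto's theorem. Two places in your sketch would need tightening before it stands on its own. First, the sentence ``on the increasing branch any fixed point would be attracting or neutral, but the condition $\chi^3(x_m) < x^*$ together with $\chi(x_m) \ge x^*$ forces $\chi'(x^*) < -1$'' is not a proof: a fixed point on the increasing branch can perfectly well be repelling, and the scalar inequality on the third iterate of the critical value does not by itself bound $|\chi'(x^*)|$ away from~$1$. In \cite{GHT} additional standing hypotheses on the unimodal map (e.g.\ negative Schwarzian, or that $x^*$ lies to the right of $x_m$ with $|\chi'(x^*)|>1$ assumed) are in play; you should either import those or give a genuine argument that the orbit condition precludes $|\chi'(x^*)|\le 1$. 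Second, your construction of the homoclinic point $x_0$ is only a heuristic: the phrase ``pulls back through the critical point'' and ``perturbing the choice of $x_0$ slightly'' needs to be replaced by an explicit intermediate-value argument producing a point in a repelling neighbourhood of $x^*$ whose forward orbit returns to $x^*$ while avoiding $x_m$. This is doable in one dimension, but as written it is a description of what one hopes happens rather than a verification.
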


We will need in conjunction with unimodal maps and their inverse limits, the notions of \textit{topological attractor}
 and \textit{asymptotically stable attractor}. First given a continuous map $f:X \to X$ on a metric space and a closed
  forward invariant set $K \subset X$, we call the \textit{basin of attraction} of $K$ the set
  $B(K):= \{y \in X, \omega(y) \subset K\}$, i.e the set of points having all the accumulation points
  of their iterates, contained in $K$. Then we say that $K$ is a \textit{topological attractor}, if
  $B(K)$ contains a residual set in an open neighbourhood $U$ of $K$ (i.e the complement of $B(K)$ in $U$
  is contained in a countable union of nowhere dense subsets) and if there is no closed forward invariant subset
  $K' \subset K$ s.t $B(K)$ and $B(K')$ coincide up to a countable union of nowhere dense sets.  \
   If $K$ is $f$-invariant (i.e $f(K) = K$), it has arbitrarily close neighbourhood $V$ s.t
   $f(V) \subset V$ and the basin $B(K)$ is open, then we say that $K$ is an \textit{asymptotically stable attractor}.

\begin{defn}\label{inverse-limit}
Given a continuous map $f: X \to X$ on a metric space $(X, d)$, we form
the \textbf{inverse limit} $(\hat X, \hat f)$, where $\hat X :=
\{\hat x = (x, x_{-1}, x_{-2}, \ldots), f(x_{-i}) = x_{-i+1}, i
\ge 1\}$ and $\hat f:\hat X \to \hat X, f(x, x_{-1}, \ldots) =
(f(x), x, x_{-1}, \ldots), \hat x \in \hat X$. We consider the
topology induced on $\hat X$ from the infinite product of $X$ with
itself. In fact $\hat X$ is a metric space with the metric
$$d(\hat x, \hat y) = \mathop{\sum}\limits_{i \ge 0}
\frac{d(x_{-i}, y_{-i})}{2^i}, \hat x, \hat y \in \hat X$$
\end{defn}

For a $\mathcal{C}^3$ smooth map $f$ on the interval $[a,
b]$, the \textit{Schwarzian derivative} is $Sf(x):=
\frac{f'''(x)}{f'(x)} - \frac 32 (\frac{f''(x)}{f'(x)})^2, x \in
[a, b]$.
We have then, by collecting several results (see \cite{Med}, \cite{R} and references therein) the following:

\begin{unthm}[Attractors in inverse limit spaces of unimodal maps]

 \ a) \ Let $f$ be a type A unimodal map on the interval $[0, 1]$, with $Sf <0$ on $[0, 1]$. If $f^2(c) = f(1) >0$ and
 $f'(0) >1$, then $\hat 0 = (0, 0, \ldots)$ is an asymptotically stable attractor and a topological attractor for
 $\hat f$ and it is the only topological attractor for $\hat f$.

 \ b) \ Let $f:[0, 1] \to [0, 1]$ be a unimodal map of type B with $Sf <0$ and assume that
 $f$ has a unique fixed point $p \in (c, 1]$ that is repelling for $f$ s.t $f(0) > p$.
  Then the point $\hat p = (p, p, \ldots) \in \widehat{[0, 1]}$ is an asymptotically stable attractor and a topological
   attractor for $\hat f$ and it is the only topological attractor of $\hat f$ in $\widehat{[0, 1]}$.

\ c) \ Let $f:[0, 1] \to [0, 1]$ be a unimodal map of type B with
$Sf <0$ and with $f(0) < p$, where $p$ is the unique fixed point
in $(c, 1]$. Assume that $f$ has topological attractor $P$ which
is either chaotic or periodic. Then the basin of attraction of $P$
contains a union of $n$ intervals $A_0, \ldots, A_{n-1}$ with
$f^i(A_0) \subset A_i, 1 \le i \le n-1$. Let $\Lambda$ be the set
of points in $[0, 1]$ that are never attracted to $P$. Then
$\Lambda$ is partitioned as $\Lambda_1 \cup \ldots \cup \Lambda_m$
where $\Lambda_j$ is an $f$-invariant transitive Cantor set and
$f|_{\Lambda_j}$ is conjugate to a subshift of finite type. Then
the shift map $\hat f$ has a unique topological attractor namely
$\hat \Lambda_0$.

\ d) \ Consider the Type C logistic map $F_\nu(x) = \nu x (1-x), x
\in [0, 1]$ for $\nu >4$, and let $\Lambda_\nu :=
\mathop{\cap}\limits_{n \ge 0} F_\nu^{-n}([0, 1])$. Then
$\Lambda_\nu$ is $F_\nu$-invariant and $F_\nu$ is topologically
conjugate to the shift on $\Sigma_2^+$. Also $\hat \Lambda_\nu$ is
an asymptotically stable attractor for $\hat F_\nu$.
\end{unthm}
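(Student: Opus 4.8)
The plan is to handle the three assertions in turn, the first two being the classical picture of the logistic family for $\nu>4$ and the last one the point that really needs the inverse limit. For $\nu>4$ the set $\{x\in[0,1]:F_\nu(x)>1\}$ is an open interval $(a_\nu,b_\nu)$ around $\tfrac12$, so $A_1:=F_\nu^{-1}([0,1])\cap[0,1]$ is a disjoint union $I_0\sqcup I_1$ of two closed intervals and $F_\nu$ maps each $I_j$ homeomorphically onto $[0,1]$; iterating, $A_n:=\bigcap_{k=0}^{n}F_\nu^{-k}([0,1])$ is a disjoint union of $2^n$ closed intervals, one for each word of length $n$ in $\{0,1\}$, and $\Lambda_\nu=\bigcap_nA_n$. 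Invariance is then immediate: $F_\nu(\Lambda_\nu)\subseteq\Lambda_\nu$ is clear, and for $y\in\Lambda_\nu$ the preimage $x\in I_0$ with $F_\nu(x)=y$ satisfies $F_\nu^n(x)\in[0,1]$ for all $n\ge0$, so $x\in\Lambda_\nu$ and $F_\nu(\Lambda_\nu)=\Lambda_\nu$. For the conjugacy I would use the itinerary map $h:\Lambda_\nu\to\Sigma_2^+$, $h(x)=(i_0,i_1,\dots)$ with $F_\nu^k(x)\in I_{i_k}$: continuity and $h\circ F_\nu=\sigma_2\circ h$ are routine, surjectivity is the nested-interval argument, and injectivity holds once the maximal diameter $\delta_n$ of the components of $A_n$ tends to $0$. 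For $\nu>2+\sqrt5$ that follows from $|F_\nu'|\ge\sqrt{\nu(\nu-4)}>1$ on $A_1$; for $4<\nu\le2+\sqrt5$ one invokes $SF_\nu<0$ and the one-dimensional distortion / no-wandering-intervals machinery, which is exactly what is borrowed from \cite{Med}, \cite{R}.

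For the last assertion I would first fix the framework. Here $F_\nu$ does not map $[0,1]$ into itself, so the inverse-limit self-map to work with is the shift $\hat F_\nu(x_0,x_{-1},\dots)=(x_{-1},x_{-2},\dots)$ on $\widehat{[0,1]}$, which records the passage along backward $F_\nu$-orbits. Two observations then organize the argument. First, the compatibility relations force $x_{-i}\in A_i$ for every $\hat x=(x_0,x_{-1},\dots)\in\widehat{[0,1]}$, conversely every $x_0\in[0,1]$ extends to such a sequence, and since $F_\nu(\Lambda_\nu)=\Lambda_\nu$ one gets $\hat\Lambda_\nu=\{\hat x\in\widehat{[0,1]}:x_0\in\Lambda_\nu\}$. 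Second, the key estimate: if $x_0\in A_N$ then $d(\hat x,\hat\Lambda_\nu)\le2\delta_N$. To see this I would pick $z_0\in\Lambda_\nu$ in the same component of $A_N$ as $x_0$ and, for $i\ge1$, take $z_{-i}$ to be the $F_\nu$-preimage of $z_{-i+1}$ lying in the same branch $I_j$ as $x_{-i}$; then $x_{-i}$ and $z_{-i}$ sit in a common component of $A_{N+i}$, so $\hat z\in\hat\Lambda_\nu$ and $d(\hat x,\hat z)=\sum_{i\ge0}2^{-i}d(x_{-i},z_{-i})\le\sum_{i\ge0}2^{-i}\delta_{N+i}\le2\delta_N$.

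The attractor properties then follow quickly. One has $\hat F_\nu(\hat\Lambda_\nu)=\hat\Lambda_\nu$ since $F_\nu|_{\Lambda_\nu}$ is onto. The present coordinate of $\hat F_\nu^{\,k}\hat x=(x_{-k},x_{-k-1},\dots)$ is $x_{-k}\in A_k$, so the key estimate gives $d(\hat F_\nu^{\,k}\hat x,\hat\Lambda_\nu)\le2\delta_k\to0$, \emph{uniformly} in $\hat x$; hence the basin $B(\hat\Lambda_\nu)$ equals all of $\widehat{[0,1]}$, in particular it is open. Moreover $\hat\Lambda_\nu$ is Lyapunov stable: for $k$ beyond some $K$ with $2\delta_K<\varepsilon$ the inequality $d(\hat F_\nu^{\,k}\hat x,\hat\Lambda_\nu)<\varepsilon$ holds with no condition on $\hat x$, and for the finitely many remaining $k$ one uses the uniform continuity of $\hat F_\nu^{\,k}$ on the compact space $\widehat{[0,1]}$ together with $\hat F_\nu^{\,k}(\hat\Lambda_\nu)\subseteq\hat\Lambda_\nu$ to produce a single $\rho>0$ with $d(\hat x,\hat\Lambda_\nu)<\rho\Rightarrow d(\hat F_\nu^{\,k}\hat x,\hat\Lambda_\nu)<\varepsilon$ for every $k\ge0$. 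Consequently the sets $\hat W_\varepsilon:=\{\hat x:d(\hat F_\nu^{\,k}\hat x,\hat\Lambda_\nu)<\varepsilon\ \text{for all }k\ge0\}$ are neighborhoods of $\hat\Lambda_\nu$, satisfy $\hat F_\nu(\hat W_\varepsilon)\subseteq\hat W_\varepsilon$, and shrink to $\hat\Lambda_\nu$ as $\varepsilon\to0$; together with invariance and openness of the basin, this is exactly the definition of an asymptotically stable attractor.

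The step I expect to be the real obstacle is twofold. Conceptually, one must recognize that although $\Lambda_\nu$ is a \emph{repeller} for $F_\nu$ on $[0,1]$ it \emph{attracts every backward orbit}, and encode this as the distance estimate above — this is really the whole content. Technically, one must secure the single geometric input $\delta_n\to0$ for the full range $\nu>4$: elementary from uniform expansion when $\nu>2+\sqrt5$, but for $4<\nu\le2+\sqrt5$ it genuinely needs the negative-Schwarzian theory (degeneration of the nested cylinders, bounded distortion along the inverse branches of $F_\nu$), so that there the cited literature does real work. The invariance of $\Lambda_\nu$, the itinerary conjugacy, and the topological verification of the attractor axioms are all routine.
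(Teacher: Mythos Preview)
The paper gives no proof of this theorem at all: it is introduced with ``by collecting several results (see \cite{Med}, \cite{R} and references therein)'' and is quoted as background from the literature of Medio--Raines and Robinson. So there is nothing in the paper to compare your argument against line by line.

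That said, two comments are in order. First, the statement has four parts a)--d), and your proposal treats only part d); parts a)--c) (the type~A and type~B pictures under $Sf<0$) are left untouched. If you meant to prove the full statement you would still owe those cases, which in the cited sources rely on the Singer/negative-Schwarzian classification of attractors for unimodal maps and a separate inverse-limit analysis; you should at least flag this.

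Second, for part d) your argument is correct and is essentially the one found in the Medio--Raines references the paper cites. You rightly observe that, since a type~C map does not send $[0,1]$ into itself, the shift $\hat F_\nu$ one must use on $\widehat{[0,1]}$ is the \emph{backward} shift $(x_0,x_{-1},\dots)\mapsto(x_{-1},x_{-2},\dots)$ rather than the map of Definition~\ref{inverse-limit} (which would leave $\widehat{[0,1]}$). Your key estimate --- choosing $z_0\in\Lambda_\nu$ in the same $A_N$-component as $x_0$ and then matching branches to get $d(x_{-i},z_{-i})\le\delta_{N+i}$ --- is exactly the mechanism that turns the forward repeller $\Lambda_\nu$ into a backward attractor on the inverse limit, and your handling of Lyapunov stability via uniform continuity of the finitely many initial iterates is clean. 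The one place where the cited literature genuinely carries weight, as you note, is $\delta_n\to0$ for $4<\nu\le 2+\sqrt5$; your identification of this as the real technical input is accurate.
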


\textbf{2. The 2-dimensional overlapping generations model.}

As in the 1-dimensional model before, we have an economy with two
sectors, a household and a production sector (see \cite{GHT}). The
household sector is the same as before, hence with perfect
foresight, we have for the offer curve $\chi(\cdot)$: \ $\ell_t =
\chi(c_{t+1})$. By comparison with the previous case, output is
now produced both from labour $\ell_t$ supplied at time $t$ by the
household sector, and by \textit{capital stock} $k_{t-1}$ from the
previous period $t-1$, supplied by non-consuming companies which
tend to maximize their profits. The output $y_t$ is the minimum
between $\ell_t$ and $k_{t-1}/a$, where $1/a$ is the productivityy
of the capital. We assume that the capital stock available at the
begining of period $t+1$ is $ k_t = (1-\delta)k_{t-1} + i_t, $
where $0<\delta<1$ is the depreciation rate of the capital and
$i_t$ is the investment, i.e the portion of the output at time $t$
which is invested in the next period. Thus the consumption at time
$t$ is $c_t = y_t - i_t$, and at equilibrium we have  $y_t = \ell_t =
\frac{k_{t-1}}{a}$. One obtains then the second order difference
equation: $$ y_t = \chi[a(1-\delta+\frac 1a)y_{t+1}-ay_{t+2}] $$
Hence by substituting $z_t = y_t$ and $w_t = y_{t+1}$ we obtain
the implicitly defined system of equations:
\begin{equation}\label{2OLG}
\left\{ \begin{array}{ll}
 z_t = \chi[a(1-\delta+ \frac {1}{a})z_{t+1} -
a w_{t+1}]  \\

w_t = z_{t+1}
 \end{array}
  \right.
\end{equation}

In this model for certain parameter values (see \cite{GHT}), the fixed point $x^*$ is a snap-back repeller,
thus by the results of Marotto (see \cite{Mar}, \cite{Mar2}) in any neighbourhood of the orbit of the snap-back repeller there is an invariant set on which $f$ is chaotic and conjugate to a 1-sided shift.

\textbf{3. Cash-in-advance model.}

The following model can be found in \cite{MR} or \cite{KSY}. In this economy there exists a central government and a representative agent, where the government consumes nothing and sets monetary policy. There exists also a cash good and a credit good, and the agent has a utility function of type
\begin{equation}\label{util}
\mathop{\sum}\limits_{t=0}^\infty \beta^t U(c_{1t}, c_{2t}),
\end{equation}
where $\beta \in (0, 1)$ is the discount factor. The function $U$
takes the form \ $ U(x, y) = \frac{x^{1-\sigma}}{1-\sigma} +
\frac{y^{1-\gamma}}{1-\gamma}, $ with $\sigma>0, \gamma >0$. The
cash good $c_{1t}$ can be bought with money $m_t$, which is
carried over from period $t-1$. The credit good $c_{2t}$ does not
require cash and can be bought on credit. Each period the agent
has an endowment $y$ and $c_{1t} + c_{2t} = y$. We assume also
that the cash good costs the same price $p_t$ as the credit good.
The agent wants to maximize his utility function by a choice of
$\{c_{1t}, c_{2t}, m_{t+1}\}_{t \ge 0}$ subject to constraints: \ $
p_t c_{1t} \le m_t$, and \ $m_{t+1} \le p_t y + (m_t -
p_tc_{1t}) + \theta M_t - p_tc_{2t},$ where $M_t$ is the money
supply controlled by the government for a constant growth, $M_{t+1}
= (1 + \theta) M_t$. Denote by $x_t = m_t/ p_t$ the level
of real money balance. We obtain then an implicitly defined difference equation giving
$x_t$ in terms of $x_{t+1}$ with the help of a non-invertible map $f$, i.e
\begin{equation}\label{CIA}
x_t = f(x_{t+1})
\end{equation}

For certain parameters, it can be shown (see \cite{MR}) that there exists an invariant interval
$[x_l, x_r]$ such that the map $f$ has a periodic cycle of period 3. Hence according to Li-Yorke classic result (see \cite{LY}),
 the map $f$ is chaotic on that interval. In fact it can be shown
 that there exists an invariant subset of $[x_l, x_r]$ on which
 the map is conjugate to a subshift of finite type.

\textbf{4. Cobweb model with adaptive adjustment-hedging.}

In this model (see \cite{O}) the supplier adjusts his production $x_t$ according to the realities of the market while keeping the intention to reach a profit maximum $\tilde x_{t+1}$. It is met for instance in agricultural markets where farmers who plant for example wheat cannot change their crop during the same year/period. This is a hedging rule
$$
x_{t+1} = x_t + \alpha (\tilde x_{t+1} - x_t),
$$
with $\alpha \in (0, 1)$ the speed of adjustment. The aggregate supply from $n$ identical producers is $X_t = nx_t$, and the price is given by $p_t = \frac{c}{Y_t^\beta}$, where $Y_t$ is the demand at period $t$ and $c$ is a fixed parameter. We assume the market clears at each period, i. e $X_t = Y_t$.
Then after a change of variable we obtain the equation
\begin{equation}\label{cobweb}
z_{t+1} = f_{\alpha, \beta}(z_t) = (1-\alpha) z_t + \frac{\alpha}{z_t^\beta}, \ (\alpha, \beta) \in (0, 1) \times (0, \infty)
\end{equation}

This function has a unique fixed point $z = 1$, which is a repeller if $|f_{\alpha, \beta}'(1)| >1$, i.e if $\beta > \frac{2-\alpha}{\alpha}$.

Then Onozaki et. al. (\cite{O}) showed that there exists a number $\bar \beta > \frac{2-\alpha}{\alpha}$ s.t for each $\beta > \bar \beta$, $f_{\cdot, \beta}(\cdot)$ has a hyperbolic horseshoe in the plane.

\textbf{5. A heterogeneous market model.}

We will give only the final formula for this 2-dimensional non-invertible case; more information can be found in \cite{FG}.
One has to study the dynamics of the non-invertible map:

\begin{equation}\label{HM}
\left\{ \begin{array}{ll}
 z_{t+1} = z_t[(1-\alpha) - \alpha\frac{b(1-m_t)}{2B}] \\

m_{t+1} = \text{tanh} [\frac{\beta b}{4} \cdot z_t^2 \cdot
(\frac{b(1-m_t)}{B}+1) + \frac {\beta}{2}(C_2 - C_1)]

 \end{array}
  \right.
\end{equation}

For this model, Foroni and Gardini proved in \cite{FG} that there
are \textit{saddle cycles} with homoclinic or heteroclinic transverse
intersections for certain parameters, which give rise to chaotic
sets (horseshoes) by Smale's Theorem or its variants (see
\cite{R}, \cite{HL}, etc.).

\textbf{Conclusions:}

In the examples above there exist parametrizations in which the
system given implicitly $z_t = f(z_{t+1})$, has some hyperbolic
set $\Lambda$ (in general without critical points) or a set where an iterate is conjugate to a 1-sided shift. The dynamics/ergodic theory in these two cases are very similar. The hyperbolic
case includes also the case with no contracting directions, i.e
the expanding case. The implicit difference equation gives the
\textit{backward dynamics} of the model. We notice that a point
from the inverse limit $\hat \Lambda$ given by $\hat x = (x,
x_{-1}, \ldots)$ represents in fact a sequence of \textit{future
equilibria} which are \textit{allowed by the backward dynamics};
so in the notation $\hat x = (x, x_{-1}, x_{-2}, \ldots)$, we
start from a level of consumption of $x$, then at time 1 we have a
level of consumption $x_{-1}$, then $x_{-2}$ at time 2, and so on.

\section{Metric and ergodic properties on inverse limits of chaotic economic models.}

For the implicitly defined economic models given before, we have seen that there exist invariant
sets on which the function (or one of its iterates) is conjugated to a shift on a symbol space;
this invariant limit set $\Lambda$ is usually obtained from homoclinic/heteroclinic orbits or snap-back repellers and thus we have
 a hyperbolic structure on $\Lambda$ (see \cite{R}, \cite{KH}, \cite{Mar2}, etc.)

  Hyperbolicity is understood here in the \textit{endomorphism sense},
   in which the unstable directions and unstable manifolds depend on whole sequences of consecutive preimages (i.e elements of $\hat \Lambda$), not only on base
   points (see \cite{Ru-carte89}, \cite{M-DCDS06} for definitions). We
   include in the hyperbolic case also the case of no contracting
   directions, i.e the expanding case. For a hyperbolic map $f$ on a compact invariant set $\Lambda$ and a small enough $\delta>0$, we denote by $W^s_\delta(x)$ the local stable manifold at the point $x \in \Lambda$, and by $W^u_\delta(\hat x)$ the local unstable manifold corresponding to the history $\hat x \in \hat \Lambda$. \
Let us prove that in this non-invertible hyperbolic case we
have stability of the  inverse limits:

\begin{thm}\label{perturbation}
Let us consider one of the economic models from Section 1, given by a dynamical system $f$ having a hyperbolic invariant set $\Lambda$. Then given any dynamical system $g$ obtained by a small $\mathcal{C}^2$ perturbation of the parameters of $f$, there exists a $g$-invariant set $\Lambda_g$ and a homeomorphism $H: \hat \Lambda \to \hat \Lambda_g$ such that $\hat g \circ H = H \circ \hat f$. Thus the dynamics of $\hat g$ on $\hat \Lambda_g$ is the same as the dynamics of $\hat f$ on $\hat \Lambda$.
\end{thm}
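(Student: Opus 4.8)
The argument naturally divides along the dichotomy of the Conclusions of Section 1: either $f$ (or one of its iterates) restricted to $\Lambda$ is topologically conjugate to the shift $\sigma_m$ on some $\Sigma_m^+$, or $\Lambda$ is a genuine hyperbolic set in the endomorphism sense. In both cases the plan is identical: first produce, for the perturbed system $g$, an invariant set $\Lambda_g$ together with a topological conjugacy $\varphi\colon(\Lambda,f)\to(\Lambda_g,g)$, and then lift $\varphi$ to the inverse limits. The lifting step is purely formal and uses only Definition \ref{inverse-limit}: if $\varphi\circ h_1=h_2\circ\varphi$ with $\varphi$ a homeomorphism, then $\hat\varphi\colon\hat X_1\to\hat X_2$, $\hat\varphi(x_0,x_{-1},x_{-2},\ldots)=(\varphi(x_0),\varphi(x_{-1}),\ldots)$, is a continuous bijection between compact metric spaces, hence a homeomorphism, and $\hat\varphi\circ\hat h_1=\hat h_2\circ\hat\varphi$; take $H=\hat\varphi$. (If only an iterate $f^k$ is conjugate to a shift, one first passes to $\Lambda\cup f(\Lambda)\cup\cdots\cup f^{k-1}(\Lambda)$, on which $f$ itself is conjugate to a cyclic extension of the subshift, and argues with that set.)

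\noindent\emph{Symbolic sub-case.} In the models of Section 1 the set $\Lambda$ comes from a snap-back repeller (Marotto's Theorem), from a transverse homoclinic/heteroclinic orbit of a saddle cycle, or from a hyperbolic horseshoe. Each of these configurations is robust under a $\mathcal{C}^1$-small — a fortiori a $\mathcal{C}^2$-small — perturbation: a repelling fixed point stays repelling and a saddle stays a saddle of the same index; the homoclinic point $x_0$ with $f^m(x_0)=p$ and $\det Df(f^i(x_0))\ne 0$ persists by the Implicit Function Theorem, transversality being $\mathcal{C}^1$-open; a hyperbolic horseshoe persists by the cone-field criterion (see \cite{R}, \cite{KH}, \cite{Mar2}). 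Hence $g$ carries a snap-back repeller, transverse homoclinic orbit, or horseshoe with the same number $m$ of symbols, so by the very theorem invoked for $f$ there is a $g$-invariant set $\Lambda_g$ with $g|_{\Lambda_g}$ conjugate to the same $\sigma_m$ on $\Sigma_m^+$. Composing with the conjugacy for $f$ gives $\varphi$, and the first paragraph concludes.

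\noindent\emph{General hyperbolic sub-case.} Here one invokes the structural stability of hyperbolic endomorphisms on inverse limits (see \cite{Ru-carte89}, \cite{M-DCDS06}). Since $\Lambda$ is hyperbolic in the endomorphism sense, the continuous splitting together with the local stable manifolds $W^s_\delta(x)$ and the history-dependent local unstable manifolds $W^u_\delta(\hat x)$ persists over a neighbourhood under a $\mathcal{C}^1$-small perturbation, because the defining cone fields are preserved. One then runs the Anosov--Moser shadowing (graph-transform) argument, but carried out on sequence space rather than on $\Lambda$: one seeks a continuous assignment of a small correction vector to each history $\hat x\in\hat\Lambda$ so that the corrected points form a $g$-orbit shadowing the $f$-orbit along that history; this is the fixed point of a transfer operator that contracts in the sup norm by the hyperbolicity rates. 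The graph of the fixed point is $\hat\Lambda_g$, its projection to the zeroth coordinate is $\Lambda_g$, and the map $\hat x\mapsto$(corrected history) is the homeomorphism $H$; uniqueness of the fixed point forces $\hat g\circ H=H\circ\hat f$, and the standard a priori estimates give continuity and invertibility of $H$.

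\noindent\emph{Main obstacle.} The essential difficulty is the non-invertibility itself. Because backward orbits are not unique, the unstable directions and local unstable manifolds are attached to whole histories $\hat x\in\hat\Lambda$, not to points of $\Lambda$, so the unstable coordinate of the conjugacy genuinely lives upstairs and no conjugacy can in general be built directly on $\Lambda$. Relatedly, a hyperbolic set of an endomorphism need not be locally maximal, so $\Lambda_g$ cannot simply be taken to be $\bigcap_{n\ge 0}g^n(U)$ but must be produced \emph{simultaneously} with $H$ by the fixed-point construction. Controlling the stable and the history-dependent unstable holonomies in this setting, and verifying that the contraction estimates are uniform over all histories, is the technical heart of the argument; the symbolic sub-case sidesteps it entirely by quoting persistence of the underlying horseshoe or snap-back configuration.
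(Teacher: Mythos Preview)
Your proposal is correct, and in its ``general hyperbolic sub-case'' it lands on exactly the reference the paper uses: the structural stability theorem for hyperbolic endomorphisms on inverse limits from \cite{M-DCDS06}. The paper's own proof, however, is much shorter and does not split into cases. It treats all of the Section~1 models uniformly under the hyperbolic umbrella (horseshoes, transverse homoclinic orbits, snap-back repellers all yield hyperbolic sets via Smale's theorem or the Hale--Lin non-invertible variant \cite{HL}), then simply \emph{defines} $\Lambda_g:=\bigcap_{n\in\mathbb Z} g^{-n}(U)$ for a suitable isolating neighbourhood $U$ of $\Lambda$, observes that $g$ is again hyperbolic there, and invokes \cite{M-DCDS06} for the conjugating homeomorphism $H$ on the inverse limits. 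Your symbolic sub-case is thus redundant from the paper's point of view, though it has the merit of being more self-contained.

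One point of friction: in your ``Main obstacle'' paragraph you assert that $\Lambda_g$ \emph{cannot} simply be taken as an intersection of iterates of a neighbourhood and must instead be produced simultaneously with $H$. That is precisely what the paper \emph{does} do, and it is legitimate here because the invariant sets arising from the Section~1 constructions are locally maximal (basic) by design; the paper makes this explicit both in the proof and in the standing assumptions later on. So your caution is appropriate for arbitrary hyperbolic sets of endomorphisms, but unnecessary---and slightly misleading---in the specific setting of the theorem.
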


\begin{proof}
From the discussion and references given in Section 1 we see that each model has, for certain parameter choices,
 invariant sets obtained from homoclinic or heteroclinic orbits, snap-back repellers or horseshoes (like the cobweb model).
The hyperbolicity is obtained from Smale's Theorem on transverse
homoclinic or heteroclinic intersections (see \cite{R}) or its
non-invertible variant given by Hale and Lin (\cite{HL}).  Now let
$U$ be a neighbourhood of $\Lambda$ s.t $\Lambda =
\mathop{\cap}\limits_{n \in \mathbb Z} f^{-n}(U)$.
 Then if $g$ is obtained from $f$ by a small $\mathcal{C}^2$ perturbation, we can form the basic set $\Lambda_g = \mathop{\cap}\limits_{n \in \mathbb Z} g^{-n}(U)$. If $f$ is hyperbolic on $\Lambda$, then also $g$ will be hyperbolic on $\Lambda_g$.
The hyperbolicity is understood as for endomorphisms, since $f$ is not necessarily invertible on $\Lambda$ (for instance for $\Lambda$ obtained from a snap back repeller, there are at least two points in $\Lambda$ with $f$-image equal to the fixed repelling point).

Hence from \cite{M-DCDS06} we infer the existence of a
conjugating homeomorphism $H : \hat \Lambda \to \hat \Lambda_g$ between the
inverse limit of $(\Lambda, f)$ and that of $(\Lambda_g, g)$,
which commutes with the lifts $\hat f$ and $\hat g$.

\end{proof}

Notice also that by perturbations and by lifting to the inverse limit, the topological entropy 
is not changed, i.e $h_{top}(g|{\Lambda_g}) = h_{top}(\hat g|_{\hat \Lambda_g}) = 
h_{top}(f|_\Lambda) = h_{top}(\hat f|_{\hat \Lambda})$. \
We discuss now the notion of \textit{utility function} on the set of intertemporal equilibria (see for eg. \cite{KS}, \cite{S}).

\begin{defn}\label{ut}
Consider a continuous function $f: X \to X$ which is non-invertible on the compact set $X$ contained in $\mathbb R$ or $\mathbb R^2$, and let $\hat X$ be the inverse limit.
A \textbf{utility function} on $\hat X$ is a function $W: \hat X \to \mathbb R$ given
by $$ W(\hat x) = \mathop{\sum}\limits_{i \ge 0} \beta^i
U(x_{-i}), $$ where $\beta \in (0, 1)$ is called the \textit{discount
factor} and,

\ a) in the case $X \subset (0, \infty)$ we have $$U(x) :=
\frac{\text{min}\{1, x\}^{1-\sigma}}{1-\sigma} +
\frac{(2-\text{min}\{1, x\})^{1-\gamma}}{1-\gamma}, \ x \in X, \
\text{with} \ \sigma >0, \gamma>0.$$

 \ b) in the case $X \subset (0, 1) \times (0, 1)$, we have $$ U(x, y):=
\frac{x^{1-\sigma}}{1-\sigma} + \frac{y^{1-\gamma}}{1-\gamma}, \
(x, y) \in X, \ \text{with} \ \sigma>0, \gamma >0.$$
\end{defn}

The discount factor in the definition of $W$ expresses the fact that future levels of consumption in intertemporal equilibria become less and less relevant to a representative consumer. \
In economic models with backward dynamics we form as before the set of intertemporal equilibria i.e the inverse limit $\hat \Lambda$, where $f|_\Lambda : \Lambda \to \Lambda$ is the restriction of the dynamical system $f$ to a compact invariant set $\Lambda$. In general $f$ is assumed hyperbolic on $\Lambda$ or conjugated to a subshift of finite type of 1-sided sequences.
The consumers/agents have a utility function $W$ given on $\hat \Lambda$. A central government would like to know the average value of $W$ over $\hat \Lambda$. The question is \textbf{with respect to which measure on $\hat \Lambda$}?

In general one uses probability measures which are preserved by the system (in fact from any arbitrary probability measure we can form an invariant one, according to Krylov-Bogolyubov procedure, see \cite{KH}). \
Now an intertemporal equilibrium $\hat x \in \hat \Lambda$ represents in fact a sequence of future levels of consumption allowed by the implicit difference equations of our economic model. In reality an agent may preffer some open sets of intertemporal equilibria over others, and thus not all equilibria will have the same weight/importance, so it is important to use invariant probability measures $\hat \mu$ on the space $\hat \Lambda$ of intertemporal equilibria.
Also if we denote by $B_n(\hat x, \vp)$ the set of points $\hat y \in \hat \Lambda$ which $\vp$-shadow the orbit of $\hat x$ up to $n$-th iterate (called also a \textit{Bowen ball} in $\hat \Lambda$), we would like to have the measure $\hat \mu$ of $B_n(\hat x, \vp)$ as large as possible. This means we keep the disorder in the system as small as possible, and is equivalent to: as small an entropy $h_{\hat \mu}$ as possible.  Indeed it can be shown in general (Brin-Katok Theorem, see \cite{Ma}) that if $\mu$ is an $f$-invariant ergodic measure on a space $X$, then
for $\mu$-almost all $x \in X$, $$h_{\mu} = \mathop{\lim}\limits_{\vp \to 0} \lim \frac 1n \mu(B_n(x, \vp))$$

For instance in the cash-in-advance model (see \cite{KS},
\cite{KSY}, \cite{S}, etc) the government is controlling controls
the money supply on the market by the growth rule $M_{t+1} =
(1+\theta)M_t$, where $\theta>0$ is the growth rate. For each
$\theta$ there exists a different invariant interval
$[x_l(\theta), x_r(\theta)]$ and inverse limit space $\hat
\Lambda(\theta)$. For a utility function $W$ like
in Definition \ref{ut}, economists are interested also in choosing
the appropiate $\theta$ so that the average value $\int_{\hat
\Lambda(\theta)} W d\hat \mu_\theta$ is largest, where $\hat \mu$
is an invariant probability on $\hat \Lambda(\theta)$. In this way
given a certain utility function, we can adjust the money growth rate $\theta$ in such a
way that the average utility value is largest. Many times we want
to study systems from the point of view of the measure of maximal
entropy, which best describes the chaotic nature of the model. Also one can be interested in adjusting the discount
factor $\beta$ of $W$ in order to maximize the average utility value.

We will say below that a compact invariant set $\Lambda$ is \textbf{basic} for $f$ if there exists an open neighbourhood $V$ of $\Lambda$ s.t $\Lambda = \mathop{\cap}\limits_{n \in \mathbb Z} f^n(V)$ and if $f$ is topologically (forward) transitive on $\Lambda$; such a set is also called \textit{locally maximal} (see \cite{KH}). In general the invariant limit sets we have considered in the economic models so far, are basic by construction.

Let us recall the following result about invariant measures on inverse limits (see for instance \cite{Ru-T}); recall that our hyperbolic case includes also the expanding case.

\begin{unthm} [Invariant Measures on Inverse Limits]
Let $f: \Lambda \to \Lambda $ be a continuous topologically transitive map on a compact metric space $\Lambda$ and let $\hat f: \hat \Lambda \to \hat \Lambda$ be its inverse limits. Then there is a bijective correspondence $\mathcal{F}$ between $f$-invariant measures on $\Lambda$ and $\hat f$-invariant measures on $\hat \Lambda$, given by $\mathcal{F}(\hat \mu) = \pi_* (\hat \mu)$ (where $\pi: \hat \Lambda \to \Lambda, \pi(\hat x) = x$ is the canonical projection).

Moreover if in addition $f$ is hyperbolic on the basic set $\Lambda$, then for any Holder continuous potential $\phi$ on $\Lambda$ there exists a unique equilibrium measure $\hat \mu_{\phi\circ \pi}$ of $\phi\circ \pi$ and $\pi_*(\hat \mu_{\phi \circ \pi}) = \mu_\phi$, where $\mu_\phi$ is the equilibrium measure of $\phi$ on $\Lambda$.
\end{unthm}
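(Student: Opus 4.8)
The plan is to prove the two assertions separately: the first is the classical identification of $(\hat\Lambda,\hat f,\hat\mu)$ with the \emph{natural extension} of $(\Lambda,f,\mu)$, and the second is a transfer of the thermodynamic formalism across the projection $\pi$.

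For the correspondence $\mathcal F$, I would first observe that $\pi=\pi_0$ is Lipschitz — by the definition of the metric on $\hat\Lambda$ one has $d(\pi\hat x,\pi\hat y)\le d(\hat x,\hat y)$ — and that it intertwines the dynamics, $f\circ\pi=\pi\circ\hat f$; consequently $\pi_*$ carries $\hat f$-invariant probabilities to $f$-invariant ones, so $\mathcal F(\hat\mu)=\pi_*\hat\mu$ is well defined. To see that $\mathcal F$ is onto, given an $f$-invariant $\mu$ I would construct $\hat\mu$ by Kolmogorov's extension theorem from the finite-dimensional laws $\mu_n:=\big(x\mapsto(x,f(x),\dots,f^n(x))\big)_*\mu$ on $\Lambda^{n+1}$: the compatibility of the family $\{\mu_n\}_n$ is precisely the invariance $f_*\mu=\mu$, the resulting product-space measure is supported on $\hat\Lambda$ because each defining relation $x_{-i+1}=f(x_{-i})$ holds $\mu_n$-a.s., one reads off $\pi_*\hat\mu=\mu$ from $\mu_0$, and $\hat f$-invariance is a one-line comparison of the finite-dimensional laws of $\hat\mu$ and of $\hat f_*\hat\mu$ (again using $f_*\mu=\mu$). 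For injectivity I would use that on $\hat\Lambda$ every coordinate $x_{-i}$ is a continuous function of $x_{-n}$ for $n\ge i$, so from the identity $\pi_n\circ\hat f^{\,n}=\pi_0$ one gets $(\pi_n)_*\hat\nu=\pi_*\hat\nu=\mu$ for any $\hat f$-invariant $\hat\nu$ projecting to $\mu$, whence $\hat\nu$ has the same finite-dimensional laws $\mu_n$ as $\hat\mu$; since cylinders generate the Borel $\sigma$-algebra of $\hat\Lambda$, $\hat\nu=\hat\mu$. This yields the bijection, with $\mathcal F^{-1}(\mu)=\hat\mu$. (Only continuity of $f$ is needed for this part; cf. \cite{Ru-T}, \cite{KH}.)

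For the second assertion I would argue as follows. Since $\pi$ is Lipschitz, $\phi\circ\pi$ is Hölder on $\hat\Lambda$ with the same exponent as $\phi$. The crucial input is the entropy identity for natural extensions, $h_{\hat\mu}(\hat f)=h_{\pi_*\hat\mu}(f)$ for every $\hat f$-invariant $\hat\mu$ (Rokhlin; see \cite{Ru-T}, \cite{KH}); together with $\int_{\hat\Lambda}\phi\circ\pi\,d\hat\mu=\int_\Lambda\phi\,d(\pi_*\hat\mu)$ and the bijectivity of $\mathcal F$, the Variational Principle then gives $P(\hat f,\phi\circ\pi)=P(f,\phi)$ and, more precisely, that $\hat\mu$ is an equilibrium measure of $\phi\circ\pi$ on $\hat\Lambda$ if and only if $\pi_*\hat\mu$ is an equilibrium measure of $\phi$ on $\Lambda$. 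Now, because $\Lambda$ is basic and $f$ is hyperbolic on it in the endomorphism sense, the thermodynamic formalism for Hölder potentials supplies a \emph{unique} equilibrium measure $\mu_\phi$ on $\Lambda$ — one can reduce to a subshift of finite type through a Markov partition and invoke Bowen's theory, or use the transfer operator in the expanding/hyperbolic endomorphism setting. Pulling $\mu_\phi$ back through $\mathcal F^{-1}$ and using the equivalence just established, $\hat\mu_{\phi\circ\pi}:=\mathcal F^{-1}(\mu_\phi)$ is the unique equilibrium measure of $\phi\circ\pi$ on $\hat\Lambda$ and, by construction, $\pi_*\hat\mu_{\phi\circ\pi}=\mu_\phi$. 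As an alternative to this last step, uniqueness upstairs also follows directly from the fact that $\hat f$ is expansive with the specification property (Theorem \ref{exp}) and that Hölder potentials have the Bowen property on expansive systems, cf. \cite{Bo}.

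The main obstacle I anticipate is making the entropy identity $h_{\hat\mu}(\hat f)=h_{\pi_*\hat\mu}(f)$ precise in the non-invertible case — here $h_\mu(f)$ is the entropy of the forward (non-injective) map whereas $\hat f$ is the invertible natural extension, so one must quote Rokhlin's theorem in the right form — and, relatedly, ensuring that a full thermodynamic formalism (existence, uniqueness, and Gibbs/Bowen properties of $\mu_\phi$) is genuinely available for hyperbolic \emph{endomorphisms} and not only for diffeomorphisms; both facts are classical, but this is where one must be careful about the hypotheses, in particular that $f(\Lambda)=\Lambda$ (true since $\Lambda$ is basic), which is what makes $\hat f$ a homeomorphism of $\hat\Lambda$.
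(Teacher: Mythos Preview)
The paper does not actually prove this statement: it is introduced with ``Let us recall the following result about invariant measures on inverse limits (see for instance \cite{Ru-T})'' and no proof is given. Your sketch is the standard argument one would find behind that citation --- the natural-extension construction via Kolmogorov consistency for surjectivity, comparison of cylinder marginals (using $\pi_n=\pi_0\circ\hat f^{-n}$ and $\hat f$-invariance) for injectivity, and then Rokhlin's entropy identity $h_{\hat\mu}(\hat f)=h_{\pi_*\hat\mu}(f)$ together with the Variational Principle to transfer equilibrium states --- and it is correct. Your closing caveats are apt but do not indicate a gap: the entropy identity for natural extensions and the thermodynamic formalism for hyperbolic endomorphisms on basic sets are both available in the references the paper already uses (\cite{Ru-T}, \cite{KH}, \cite{Bo}), and indeed the paper later proves independently (Theorem~\ref{exp}) that $\hat f$ is expansive with specification, which gives the alternative route to uniqueness upstairs that you mention.
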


We give now a formula for the average value of the utility with respect to \textit{any} invariant measure on the inverse limit.

\begin{thm}\label{inv}
Consider a continuous non-invertible map $f$ defined on an open set $V$ in $\mathbb R^2$ or in $\mathbb R$, which has an invariant basic set $\Lambda$.  Let also $W(\hat x) = \mathop{\sum}\limits_{i \ge 0} \beta^i U(x_{-i})$ be a utility function on the inverse limit $\hat \Lambda$ as in Definition \ref{ut}. Then for any $\hat f$-invariant borelian measure $\hat \mu$ on $\hat \Lambda$ we have that the average value $$\int_{\hat \Lambda} W d\hat \mu = \frac{1}{1-\beta} \int_\Lambda U d\mu,$$
where $\mu = \pi_*(\hat \mu)$.
If in addition $f$ is hyperbolic on $\Lambda$ and if $\mu_0$ is the unique $f$-invariant measure of maximal entropy on $\Lambda$ and $\hat \mu_0$ is the unique measure of maximal entropy on $\hat \Lambda$, then $\mu_0 = \pi_*(\hat \mu_0)$ and $\int_{\hat \Lambda} W d\hat \mu_0 = \frac{1}{1-\beta} \int_{\Lambda} U d\mu_0$.
\end{thm}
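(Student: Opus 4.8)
The plan is to exploit the fact that, although $f$ is non-invertible on $\Lambda$, its lift $\hat f$ is a homeomorphism of $\hat \Lambda$, and to rewrite the utility function in terms of the backward iterates of $\hat f$. From Definition \ref{inverse-limit} one checks directly that $\hat f^{-1}(\hat x) = (x_{-1}, x_{-2}, \ldots)$, hence $\hat f^{-i}(\hat x) = (x_{-i}, x_{-i-1}, \ldots)$ and therefore $x_{-i} = \pi\bigl(\hat f^{-i}(\hat x)\bigr)$ for every $i \ge 0$. Consequently
$$ W(\hat x) = \sum_{i \ge 0} \beta^i\, U(x_{-i}) = \sum_{i \ge 0} \beta^i\, (U\circ\pi)\bigl(\hat f^{-i}(\hat x)\bigr), \qquad \hat x \in \hat \Lambda. $$

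Next I would integrate this identity against $\hat\mu$ and interchange the sum with the integral. This is legitimate by dominated convergence: $\Lambda$ is compact and $U$ is continuous on $\Lambda$ (in either case (a) or (b) of Definition \ref{ut}, where on the compact $\Lambda$ one stays away from the singularities of $U$), so $M := \sup_{\Lambda}|U| < \infty$ and the partial sums are dominated by $M\sum_{i\ge 0}\beta^i = M/(1-\beta)$. Using then the $\hat f$-invariance of $\hat\mu$ together with the bijectivity of $\hat f$ on $\hat\Lambda$, each term satisfies $\int_{\hat\Lambda}(U\circ\pi)\circ\hat f^{-i}\,d\hat\mu = \int_{\hat\Lambda}(U\circ\pi)\,d\hat\mu$, so that
$$ \int_{\hat\Lambda} W\,d\hat\mu = \Bigl(\sum_{i\ge 0}\beta^i\Bigr)\int_{\hat\Lambda}(U\circ\pi)\,d\hat\mu = \frac{1}{1-\beta}\int_{\hat\Lambda}(U\circ\pi)\,d\hat\mu = \frac{1}{1-\beta}\int_{\Lambda} U\,d\mu, $$
the last step being the change-of-variables formula for the pushforward $\mu = \pi_*\hat\mu$. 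This establishes the first assertion.

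For the second assertion, assume in addition that $f$ is hyperbolic on the basic set $\Lambda$. Then, by expansiveness and the specification property (cf. Theorem \ref{exp}, or directly \cite{Bo}), both $f|_\Lambda$ and $\hat f|_{\hat\Lambda}$ admit a unique measure of maximal entropy, $\mu_0$ and $\hat\mu_0$ respectively. By the theorem on Invariant Measures on Inverse Limits recalled above, $\mathcal F = \pi_*$ is a bijection from $\hat f$-invariant measures onto $f$-invariant measures; moreover $(\hat\Lambda, \hat f, \hat\mu)$ is the natural extension of $(\Lambda, f, \pi_*\hat\mu)$, so $h_{\hat\mu}(\hat f) = h_{\pi_*\hat\mu}(f)$ for every invariant $\hat\mu$ (see e.g. \cite{KH}). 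Hence $\pi_*$ carries the unique measure of maximal entropy on $\hat\Lambda$ to the unique one on $\Lambda$, i.e. $\mu_0 = \pi_*(\hat\mu_0)$; applying the first part with $\hat\mu = \hat\mu_0$ then yields $\int_{\hat\Lambda} W\,d\hat\mu_0 = \frac{1}{1-\beta}\int_\Lambda U\,d\mu_0$.

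The computation itself is routine. The two points that require a little care are the justification of the term-by-term integration — handled by the uniform bound on $U$ over the compact set $\Lambda$ together with the geometric decay of $\beta^i$ — and the invariance of measure-theoretic entropy under passage to the inverse limit, which I would simply quote as the classical fact that a system and its natural extension have equal entropy, combined with the uniqueness of the measure of maximal entropy in the hyperbolic basic case. I do not expect any genuine obstacle.
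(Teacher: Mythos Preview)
Your proof is correct and follows essentially the same strategy as the paper: justify term-by-term integration via the uniform bound on $U$ and the geometric factor $\beta^i$, use invariance of the measure to reduce every term to $\int_\Lambda U\,d\mu$, and sum the geometric series; for the second assertion both you and the paper invoke the bijection $\pi_*$ between invariant measures on $\hat\Lambda$ and $\Lambda$ (the paper citing its stated Theorem on Invariant Measures on Inverse Limits, you the entropy-preservation under natural extension). The only cosmetic difference is that the paper first composes the truncation $W_n$ with $\hat f^{\,n}$ so as to express everything through forward $f$-iterates and then applies the $f$-invariance of $\mu$, whereas you work directly with $\hat f^{-i}$ and the $\hat f$-invariance of $\hat\mu$; these are equivalent one-line manipulations.
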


\begin{proof}
If we take the approximating functions $W_n(\hat x) =
\mathop{\sum}\limits_{i=0}^n \beta^i U(x_{-i})$, then $W_n$ converge uniformly towards $W$ 
since $||W-W_n|| \le C \beta^n, n \ge 1$. Hence 
$\int_{\hat\Lambda} W_n d\hat \mu  \mathop{\to} \limits_{n \to \infty}
\int_\Lambda W_n d \hat \mu$. Now recall that the measure $\hat
\mu$ is $\hat f$-invariant hence $$\int_{\hat \Lambda} W_n d\hat
\mu = \int_{\hat \Lambda} W_n\circ \hat f^n d\hat \mu = \int_{\hat
\Lambda} U(f^nx) + \beta U(f^{n-1} x) + \ldots + \beta^n U(x)
d\hat \mu$$

But now from the fact that $\mu = \pi_* (\hat \mu)$ we see that
$\int_{\hat \Lambda} g\circ \pi d\hat \mu = \int_{\Lambda} g d \mu$,
if $g$ is any continuous function on $\Lambda$. From the $f$-invariance of $\mu$ we have
$\int_\Lambda U\circ f^i d\mu = \int_\Lambda U d\mu, i \ge 0$; thus in our case $$\int_{\hat
\Lambda} W_n d\hat \mu = \int_\Lambda U(f^n x) + \ldots + \beta^n
U(x) d\mu(x) = (1 + \beta + \ldots + \beta^n) \int_\Lambda U(x)
d\mu(x)$$ So
from the approximation above, we obtain in conclusion that $$
\int_{\hat \Lambda} W d\hat \mu = \frac{1}{1-\beta} \int_\Lambda U
d\mu$$

In particular from the Theorem on Equilibrium Measures above, we
obtain that the unique measure of maximal entropy on $\Lambda$ is
the projection of the unique measure of maximal entropy on $\hat
\Lambda$, i.e $\mu_0 = \pi_*(\hat \mu_0)$ and from the above, $\int_{\hat \Lambda} W d\hat \mu_0 = \frac{1}{1-\beta} \int_{\Lambda} U d\mu_0$.

\end{proof}

If we consider $\mathcal{C}^2$-perturbations $g$ of a hyperbolic endomorphism $f$ on a basic set $\Lambda$ (including the case of a perturbation of an expanding endomorphism on a basic set), then from Theorem \ref{perturbation} we see that there exists a $g$-invariant basic set $\Lambda_g$ s.t $g$ is hyperbolic on $\Lambda_g$ and there exists a conjugating homeomorphism $H: \hat \Lambda \to \hat \Lambda_g$ with $\hat g \circ H = H \circ \hat f$. Then the measure of maximal entropy on $\hat \Lambda_g$, denoted by $\hat \mu_{0, g}$, is obtained as $H_*(\hat \mu_0)$, where $\hat \mu_0$ is the unique measure of maximal entropy on $\Lambda$. \
Thus in general we an calculate the average value of the utility $W$ with respect to the measure of maximal
entropy $\int_{\hat \Lambda_g} W d\hat \mu_{0, g}$ by applying Theorem \ref{inv} and the fact that $\mu_{0, g} = (\pi_g\circ H \circ \hat f)_*(\hat \mu_0)$, i.e
$$\int_{\hat \Lambda_g}W d\hat \mu_0 = \frac{1}{1-\beta} \int_{\Lambda_g} U d (\pi_g\circ H \circ \hat f)_*(\hat \mu_0)$$

The average values of $U$ on  $\hat\Lambda_g$
with respect to the corresponding measures of
maximal entropy, are easier to estimate than those on inverse
limits.  Economists can use this information to compare average
utility values with respect to the corresponding measures of
maximal entropy for various perturbations, which in reality are
translated by adjustments of the money growth rates.

A case in which this average utility ranking can be applied nicely is for the 1-dimensional overlapping generations economic model in which the backward dynamics is given by a Type C unimodal map (typically the \textit{logistic function} $F_\nu(x) = \nu x(1-x)$ with $\nu > 4$). In this case a central government can choose \textbf{both} the $\nu$ and the $\beta$ which \textbf{maximize the average utility value} over the set of intertemporal equilibria, with respect to the \textbf{measure of maximal entropy} (i.e the invariant measure describing the chaotic distribution over time).

\begin{cor}\label{log}
Let a family of logistic maps given by $F_\nu(x) = \nu x(1-x), x
\in [0, 1]$ with $\nu >4$; then $F_\nu$ has an invariant Cantor
set $\Lambda_\nu$. Consider also a utility function $W_\beta(\hat x) =
\mathop{\sum}\limits_{i \ge 0} \beta^i U(x_{-i})$ with $U(x) :=
\frac{\text{min}\{1, x\}^{1-\sigma}}{1-\sigma} +
\frac{(2-\text{min}\{1, x\})^{1-\gamma}}{1-\gamma}, \ x \in (0,
1), \ \text{for some} \ \sigma
>0, \gamma>0$. Then $$\int_{\hat \Lambda_\nu} W_\beta d\hat \mu_0 =
\frac{1}{1-\beta} \int_{\Sigma_2^+} U\circ h_\nu^{-1} d \mu_{\frac
12, \frac 12},$$ where $\hat \mu_0$ is the measure of maximal
entropy on $\hat\Lambda_\nu$, $\mu_{\frac 12, \frac 12}$ is the
measure of maximal entropy on $\Sigma_2^+$ and $h_\nu :
\Lambda_\nu \to \Sigma_2^+$ is the itinerary map, i.e $h_\nu(x) =
(j_0, j_1, \ldots)$ s.t $F_\nu^k(x) \in I_{j_k}, k \ge 0$ where
$F_\nu^{-1}([0, 1]) = I_1 \cup I_2, \ I_1 \cap I_2 = \emptyset$.
\end{cor}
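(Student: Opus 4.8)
The plan is to reduce the integral over $\hat\Lambda_\nu$ to one over $\Lambda_\nu$ by Theorem \ref{inv}, and then to transport it to the symbol space $\Sigma_2^+$ via the coding conjugacy $h_\nu$. By part d) of the Theorem on attractors in inverse limit spaces of unimodal maps, for $\nu>4$ the set $\Lambda_\nu=\bigcap_{n\ge 0}F_\nu^{-n}([0,1])$ is $F_\nu$-invariant and $F_\nu|_{\Lambda_\nu}$ is topologically conjugate, through the itinerary map $h_\nu$, to the one-sided full shift $\sigma_2$ on $\Sigma_2^+$; in particular $\Lambda_\nu$ is a basic set on which $F_\nu$ is expanding, so the hypotheses of Theorem \ref{inv} are satisfied. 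Applying Theorem \ref{inv} to the utility $W_\beta$ gives directly
$$\int_{\hat\Lambda_\nu}W_\beta\, d\hat\mu_0=\frac{1}{1-\beta}\int_{\Lambda_\nu}U\, d\mu_0,\qquad \mu_0:=\pi_*(\hat\mu_0),$$
where, by the same theorem, $\mu_0$ is the unique $F_\nu$-invariant measure of maximal entropy on $\Lambda_\nu$.

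It then remains to identify $\int_{\Lambda_\nu}U\, d\mu_0$ with the right-hand side of the asserted formula. Since $h_\nu$ is a topological conjugacy and measure-theoretic entropy is invariant under conjugacy, the push-forward $(h_\nu)_*$ carries the unique measure of maximal entropy of $F_\nu|_{\Lambda_\nu}$ onto the unique measure of maximal entropy of $\sigma_2$ on $\Sigma_2^+$; the latter is the symmetric Bernoulli measure $\mu_{\frac12,\frac12}$ (it realizes $h_{top}(\sigma_2)=\log 2$, and the measure of maximal entropy of the full $2$-shift is unique, being its Parry measure). Hence $(h_\nu)_*\mu_0=\mu_{\frac12,\frac12}$, and the change-of-variables formula for push-forwards, applied to the observable $U\circ h_\nu^{-1}$ on $\Sigma_2^+$, gives
$$\int_{\Lambda_\nu}U\, d\mu_0=\int_{\Sigma_2^+}U\circ h_\nu^{-1}\, d\big((h_\nu)_*\mu_0\big)=\int_{\Sigma_2^+}U\circ h_\nu^{-1}\, d\mu_{\frac12,\frac12}.$$
Substituting into the previous display yields exactly the claim.

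The argument is short; the points that need a little care are all routine. One checks that $h_\nu$ is indeed a homeomorphism, which is standard for $\nu>4$: the two branches of $F_\nu^{-1}([0,1])=I_1\cup I_2$ are disjoint compact subintervals, so the itinerary map is continuous, injective, surjective onto $\Sigma_2^+$, and has continuous inverse. One also uses that the measure of maximal entropy is a conjugacy invariant and is unique for the full shift. Finally, $U\circ h_\nu^{-1}$ is continuous and bounded above, so the integral on the right is well defined; it is finite whenever $U$ is bounded on $\Lambda_\nu$ (e.g. for $\sigma,\gamma\in(0,1)$, or for $\nu$ large, since then $\Lambda_\nu$ stays away from the fixed point $0$), and in the remaining cases the possible unboundedness of $U$ near $0$ is immaterial, since $\mu_0$ is non-atomic and both sides of the identity simply become $-\infty$ together. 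I do not expect any real obstacle: the content of the corollary is precisely the reduction to the fixed, $\nu$-independent reference measure $\mu_{\frac12,\frac12}$, with the whole $\nu$-dependence absorbed into the transported observable $U\circ h_\nu^{-1}$.
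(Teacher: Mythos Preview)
Your proof is correct and follows essentially the same route as the paper's: apply Theorem \ref{inv} to reduce $\int_{\hat\Lambda_\nu} W_\beta\, d\hat\mu_0$ to $\frac{1}{1-\beta}\int_{\Lambda_\nu} U\, d\mu_0$, then use the conjugacy $h_\nu$ (and the conjugacy-invariance and uniqueness of the measure of maximal entropy) to identify $\mu_0$ with the pullback of $\mu_{\frac12,\frac12}$ and transport the integral to $\Sigma_2^+$. One small correction to your parenthetical remark on integrability: since $0$ is a fixed point of $F_\nu$, it belongs to $\Lambda_\nu=\bigcap_{n\ge 0}F_\nu^{-n}([0,1])$ for every $\nu>4$, so $\Lambda_\nu$ never ``stays away from $0$''; this does not, however, affect the main argument, which the paper also carries out without addressing this boundedness issue.
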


\begin{proof}
For the logistic map $F_\nu$ with $\nu >4$ it is well known  (see
for instance \cite{R}) that $F_\nu$ has an invariant Cantor set
$\Lambda_\nu$. For $\nu> 2+\sqrt{5}$ the map $F_\nu$ is expanding
in the Euclidean metric, and for $4 < \nu \le 2+ \sqrt{5}$, the
map $F_\nu$ is expanding in a modified metric.

Also recall that $F_\nu^{-1}([0, 1]) = I_1 \cup I_2 \subset [0,
1]$ where the subintervals $I_1, I_2$ are disjoint. Then we have
the itinerary map $h_\nu: \Lambda_\nu \to \Sigma_2^+, h(x) = (j_0,
j_1, \ldots)$ given by $F_\nu^k(x) \in I_{j_k}, k \ge 0, x \in
\Lambda_\nu$. It can be noticed that $h_\nu$ is a homeomorphism
which gives the conjugacy between $F_\nu|_{\Lambda_\nu}$ and
$\sigma_2|_{\Sigma_2^+}$.

Now consider the measure of maximal entropy
 $\mu_{\frac 12, \frac 12}$ on $\Sigma_2^+$; we know (see for instance \cite{KH}) that $\mu_{\frac 12, \frac 12}$  gives measure
 $\frac {1}{2^k}$ to each of the cylinders $\{\hat\omega = (i_0, \ldots, i_{k-1}, j_{k}, \ldots), j_{k}, \ldots
 \in \{1, 2\}\}$ when $i_0, \ldots, i_{k-1}$ are fixed, ranging in $\{1, 2\}$.

From the conjugacy above,  $h_\nu^{-1}$ transports the measure of maximal entropy
 $\mu_{\frac 12, \frac 12}$ on $\Sigma_2^+$ to the measure of maximal entropy $\mu_0$ on $\Lambda_\nu$, i.e
 $(h_\nu^{-1})_*(\mu_{\frac 12, \frac 12}) =
 \mu_0$.
 And from the Theorem on Invariant Measures on Inverse Limits above, we know that
 $\mu_0 = \pi_*(\hat \mu_0)$, where $\hat \mu_0$ is the unique measure of maximal entropy on $\hat
 \Lambda_\nu$. \
So by applying Theorem \ref{inv} we obtain that $$\int_{\hat
\Lambda_\nu} W_\beta d\hat \mu_0 = \frac{1}{1-\beta}
\int_{\Sigma_2^+} U \circ h_\nu^{-1} d \mu_{\frac 12, \frac 12}$$
\end{proof}

Since we have an expression for the itinerary map
$h_\nu$  not difficult to approximate, and since the
measure $\mu_{\frac 12, \frac 12}$ is relatively easy to work
with, one can use Corollary \ref{log} to find a pair of
parameters $(\nu, \beta)$ maximizing the \textit{average
utility value with respect to the measure of maximal entropy} $$\int_{\hat \Lambda_\nu} W_\beta(\hat x)
d\hat\mu_0(\hat x)$$

\

We will now consider the \textbf{second ranking option} for
utility functions, i.e with respect to their equilibrium measures.
First we give some general topological dynamics definitions and
results.

\begin{defn}\label{expans}
A homeomorphism $f:X \to X$ on a metric space $X$ is called
\textit{expansive} if there exists a positive constant $\delta_0$
s.t if $d(f^i x, f^i y) < \delta_0, i \in \mathbb Z$ then $x = y$.
\end{defn}

The following property is very important for the existence of equilibrium measures of Holder continuous potentials (see \cite{Bo}, \cite{KH}).

\begin{defn}\label{spec}
Let a metric space $X$ and a continuous map $f:X \to X$. A
specification $S = (\tau, P)$ consists of a finite collection
$\tau = \{I_1, \ldots, I_m\}$ of finite intervals $I_i = [a_i,
b_i] \subset \mathbb Z$ and a map $P:T(S)=\mathop{\cup}\limits_{i
=1}^m I_i \to X$ s.t for any $t_1, t_2 \in I_j \in \tau$, we have
$f^{t_2 - t_1}(P(t_1)) = P(t_2)$. The specification $S$ is said to
be $n$-spaced if $a_{i+1} > b_i +n, 1 \le i \le m$ and the minimal
such $n$ is called the spacing of $S$. Let us denote also by $L(S)
= b_m - a_1$. We say that $S$ is $\vp$-shadowed by a point $x \in
X$ if $d(f^n(x), P(n)) < \vp$ for all $n \in T(S)$; if $T(S)$
contains also negative integers, we shadow with iterates of a
preimage of large order of $x$. The map $f$ has the
\textbf{specification property} if for any $\vp>0$ there exists an
$M = M_\vp \in \mathbb N$ s.t any $M$-spaced specification $S$ is
$\vp$-shadowed by a point of $X$ and for any $q \ge M + L(S)$,
there is a period-$q$ orbit $\vp$-shadowing $S$.
\end{defn}

\textbf{Remark.} In the above Definition, if $x$ is the period-$q$ point used in the shadowing and if $a_1 < 0$, then instead of $f^{a_1}(x)$ we can take $f^{kq+a_1}(x)$, for the smallest integer $k \ge 0$ s.t $0 \le kq+a_1 < q$  (as the map is non-invertible); then use forward iterates of this point $f^{kq+a_1}(x)$ in the shadowing of the specification.$\hfill\square$

 Let us consider now a continuous map $f:X \to X$ on a metric space $X$ and its inverse limit $(\hat X, \hat f)$, where $\hat X$ is the space of infinite sequences of consecutive preimages and $\hat f: \hat X \to \hat X$ is the shift homeomorphism.
In the sequel we will consider mixing basic sets $\Lambda$, i.e basic sets for the endomorphism $f$ s.t $f$ is topologically mixing on $\Lambda$. In fact from the Spectral Decomposition Theorem (see \cite{Bo}, \cite{KH}), any basic set can be decomposed into a finite partition $\Lambda_1, \ldots, \Lambda_s$ s.t for each $j$ there is some iterate $f^{k_j}$ which leaves $\Lambda_j$ invariant and which is topologically mixing on $\Lambda_j$.

\begin{thm}\label{exp}
Let us consider one of the examples from Section 1 that has a mixing basic set $\Lambda$ on which $f$ is hyperbolic.
Then the shift homeomorphism $\hat f$ is
expansive and has specification property on the inverse limit $\hat \Lambda$.
\end{thm}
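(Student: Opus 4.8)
The plan is to transfer everything to the symbolic side and back. First I would observe that, by the Spectral Decomposition remark preceding the theorem, it suffices to treat a single mixing piece, so I may assume $f$ is topologically mixing on the hyperbolic basic set $\Lambda$. Since $f$ is hyperbolic (in the endomorphism sense) on $\Lambda$, the standard construction of Markov partitions for hyperbolic endomorphisms (see \cite{Ru-carte89}, \cite{M-DCDS06}) gives a finite-to-one semiconjugacy from a subshift of finite type $\Sigma_A^+$ onto $(\Lambda, f)$; lifting to the inverse limit, $(\hat\Lambda, \hat f)$ is a factor (in fact, off a small exceptional set, conjugate) to the two-sided subshift $(\Sigma_A, \sigma_A)$, with $\sigma_A$ the natural shift homeomorphism. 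The two properties I must establish — expansiveness and specification for $\hat f$ — are both classical for $(\Sigma_A, \sigma_A)$ when $A$ is aperiodic (i.e.\ the mixing case), so the work is to push them through the coding.

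For expansiveness: I would argue directly on $\hat\Lambda$ using the local product structure. Because $f$ is hyperbolic on $\Lambda$, there is $\delta_0>0$ so that if $d(f^i x, f^i y)<\delta_0$ for all $i\ge 0$ then $y\in W^s_{\delta_0}(x)$, and symmetrically, if a pair of full histories $\hat x,\hat y\in\hat\Lambda$ satisfy $d(x_{-i},y_{-i})<\delta_0$ for all $i\ge 0$ as well, then $x$ and $y$ lie on a common local unstable manifold $W^u_{\delta_0}(\hat x)$. Thus $d(\hat f^i\hat x,\hat f^i\hat y)<\delta_0'$ for all $i\in\mathbb Z$ (with $\delta_0'$ governing the metric on $\hat\Lambda$) forces $x\in W^s_{\delta_0}(x)\cap W^u_{\delta_0}(\hat x)=\{x\}$, hence $\hat x=\hat y$; this is exactly Definition \ref{expans}. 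Equivalently one quotes that a subshift of finite type is expansive and expansiveness passes to the inverse limit of any expansive map, which is the route in \cite{M-DCDS06}.

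For the specification property, I would verify Definition \ref{spec} on $\hat\Lambda$. Given $\vp>0$, choose $k$ so that two points of $\hat\Lambda$ sharing the same symbolic coordinates on a central block of length $2k$ are $\vp$-close. Given an $M$-spaced specification $S=(\tau,P)$ with $M$ large (to be fixed), each $P(I_j)$ corresponds to a finite admissible word in $\Sigma_A$; mixing of $A$ means there is $N$ such that any two admissible words can be joined by a connecting word of length exactly $N$, so taking $M>N+2k$ lets me concatenate the blocks coming from $I_1,\dots,I_m$ into one admissible bi-infinite (or long periodic) sequence that $\vp$-shadows $S$ in the sense of Definition \ref{spec}; the periodic version, giving for every $q\ge M+L(S)$ a period-$q$ orbit, comes from closing up the concatenated word periodically, again using aperiodicity of $A$ to pad to the required length, exactly as in the Remark following Definition \ref{spec} about non-invertibility (one uses forward iterates of a preimage of large order when the specification reaches into negative time). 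Translating the shadowing point from $\Sigma_A$ back to $\hat\Lambda$ via the coding map completes the argument.

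The main obstacle I anticipate is bookkeeping at the coding step rather than any conceptual difficulty: the semiconjugacy $\Sigma_A^+\to\Lambda$ is only finite-to-one (boundaries of the Markar partition), so one must check that the exceptional set is small enough that expansiveness and the shadowing construction are unaffected — this is routine but is where care is needed. Equally, one must make sure the non-invertibility of $f$ does not obstruct realizing specifications with negative time support; this is handled precisely by the Remark after Definition \ref{spec}, replacing $f^{a_1}$ by $f^{kq+a_1}$ for suitable $k$. Everything else is a direct transcription of the classical facts that aperiodic subshifts of finite type are expansive and have specification, combined with the endomorphism-hyperbolicity results of \cite{M-DCDS06}.
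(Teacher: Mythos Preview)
Your expansiveness argument via the local product structure is essentially identical to the paper's. For specification, however, the paper takes a different route: rather than coding through a Markov partition and transferring the property from the two-sided subshift, the author first asserts (by adapting Theorem 18.3.9 of \cite{KH} to endomorphisms) that $f$ itself has the specification property on $\Lambda$, and then lifts this \emph{directly} to $\hat\Lambda$. Concretely, given an $\hat M$-spaced specification $\hat S=(\{[a_i,b_i]\},\hat P)$ in $\hat\Lambda$ and $\vp>0$, the paper chooses $r=r(\vp)$ with $2^{-r}<\vp/2$, extends each interval backward to $[a_i-r,b_i]$, and assigns to $a_i-r$ the coordinate $\omega^i_{-r}$ of $\hat P(a_i)$; this produces an ordinary specification $S$ in $\Lambda$. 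Applying specification for $f$ (with tolerance $\vp/4$) yields a periodic point $p\in\Lambda$ whose lift $\hat p\in\hat\Lambda$ is then checked by a geometric-series estimate in the inverse-limit metric to $\vp$-shadow $\hat S$, with $\hat M(\vp)=M(\vp/4)+r(\vp)$.

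Your symbolic approach is correct and conceptually clean once the coding is in place, but it imports a nontrivial ingredient: the existence of Markov partitions for hyperbolic \emph{endomorphisms} on basic sets, which is not entirely standard and is not what \cite{M-DCDS06} is cited for elsewhere in the paper. The paper's direct lifting avoids this dependency at the cost of explicit $\vp$-bookkeeping. Note also that your worry about the finite-to-one exceptional set of the coding is largely unnecessary for specification (it passes to factors via any continuous surjective semiconjugacy), and your aside that ``expansiveness passes to the inverse limit of any expansive map'' should be used with care here, since in the genuinely hyperbolic (non-expanding) case $f$ on $\Lambda$ need not be positively expansive; your primary local-product-structure argument is the right one.
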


\begin{proof}

First of all let us show that $\hat f$ is expansive on $\hat
\Lambda$. Let $\hat x, \hat y \in \hat \Lambda$ s.t $d(\hat f^i
\hat x, \hat f^i \hat y) < \delta$ for all $i \in \mathbb Z$ and
some small $\delta>0$. Now $f$ is hyperbolic as an endomorphism on
$\Lambda$ which from construction is a locally maximal set, i.e
there exists a neighbourhood $U$ of $\Lambda$ s.t $\Lambda =
\mathop{\cap}\limits_{n \in \mathbb Z} f^n(U)$. Then if $d(\hat
f^i \hat x, \hat f^i \hat y) < \delta, i \in \mathbb Z$, it
follows that $d(f^i x, f^i y) < \delta, i \ge 0$, hence $y \in
W^s_\delta(x)$. On the other hand if $d(x_{-i}, y_{-i}) < \delta,
i \ge 0$, for certain prehistories $\hat x, \hat y \in \hat
\Lambda$, it follows that $y \in W^u_\delta(\hat x)$. Now if
$\Lambda$ is a hyperbolic locally maximal set for $f$ it follows
that it has local product structure (see \cite{KH}); thus
$W^s_\delta(x) \cap W^u_\delta(\hat x) = \{x\}$ for $\delta>0$
small enough, so $x = y$. By repeating this argument for all
preimages $x_{-i}$ we obtain that $x_{-i} = y_{-i}, i \ge 0$.
Therefore $\hat x = \hat y$, and $\hat f$ is expansive on $\hat
\Lambda$.

Let us prove now that $\hat f$ has the specification property on $\hat \Lambda$.
We assumed that $f$ is hyperbolic and topologically mixing on $\Lambda$. 
Then  as in Theorem 18.3.9 of \cite{KH} we can adapt the proof to endomorphisms to show that $f$ has specification property on $\Lambda$.

In order to prove that $\hat f$ has the specification property on $\hat \Lambda$, let us consider a specification
$\hat S$ in $\hat \Lambda$,
$\hat S = (\hat\tau, \hat P)$, where $\hat \tau$ is a collection of finitely many intervals in
$\mathbb Z$ and $\hat P$ is a correspondence between $T(\hat \tau)$ and $\hat \Lambda$.
Assume that $\hat \tau = \{I_1, \ldots, I_m\}$, with $I_i = [a_i, b_i]$ and that
$\hat P(a_i) = \hat \omega^i = (\omega^i, \omega^i_{-1}, \ldots) \in \hat \Lambda, 1 \le i \le m$.

Consider a small $\vp>0$. We will construct now a
specification $S$ in $\Lambda$ with bigger intervals than those of
$\hat S$. Assume that $\text{diam}(\Lambda) \le 1$ and
take $r = r(\vp)$ so large that $\frac{1}{2^{r}} < \vp/2$. Then we
see that if $d(f^j(x_{-r}), f^j(y_{-r})) < \vp/4, 0 \le j \le r$,
then $d(\hat x, \hat y) < \vp$, where $\hat x = (x, x_{-1},
\ldots), \hat y = (y, y_{-1}, \ldots)$. Hence consider the
specification $S$ in $\Lambda$ of the form $(\tau, P)$, where
$\tau = \{[a_1 -r, b_1] \ldots, [a_m -r, b_m]\}$ and $P(a_i-r) =
\omega^i_{-r}, \ldots, P(b_i) = f^{b_i - a_i}(\omega^i), 1 \le i
\le m$. If $a_1 -r<0$ then instead of $f^{a_1-r}(p)$ we take in the shadowing the iterate $f^{kq+a_1-r}(p)$, for the smallest integer $k \ge 0$ s.t $kq+a_1-r \in [0, q)$. For the other points in the orbit of $p$ used for shadowing we take the
positive  iterates  of $f^{kq+a_1-r}(p)$, i.e $d(\omega^1_{-r+1}, f^{kq+a_1-r+1}(p)) < \vp/4$, etc.

Now assume that the specification $\hat S$ is $(M + r)$-spaced,
where $M=M(\vp/4)$ is the spacing from the specification property
of $f|_\Lambda$ corresponding to $\vp/4$, and where $r = r(\vp)$
is given above. Then from the specification property of $f$ on
$\Lambda$ it follows that for $q \ge M+L(S)= M + L(\hat S) + r$ there is a period-$q$
orbit $\{p, f(p), \ldots, f^{q-1}(p)\}$ which $\vp/4$-shadows $S$.
Then for $r = r(\vp)$ we can take $\hat M(\vp):= M(\vp/4) + r$,
and the orbit of the periodic point of period $q$, $$\hat p= (f^{kq+a_1-r}(p),
f^{kq+a_1-r-1}(p),  \ldots, p, \ldots, f^{kq+a_1-r}(p), \ldots) \in \hat \Lambda$$ We know from
the construction of $S$ that the orbit of $f^{kq+a_1-r}(p)$, $\vp/4$-shadows the composite
chain of points $$\{\omega^1_{-r}, \ldots, \omega^1, \ldots
f^{b_1-a_1}(\omega^1)\} \cup \ldots \cup \{\omega^m_{-r}, \ldots,
\omega^m, \ldots, f^{b_m - a_m}(\omega^m)\}$$

Thus we have $d(\omega^1_{-r}, f^{kq+a_1-r}(p))< \vp/4, \ldots, d(\omega^1,
 f^{kq+a_1}(p)) < \vp/4, \ldots, d(f^{b_1 - a_1}(\omega^1), f^{kq+b_1}(p)) < \vp/4$ and so on up to the interval $I_m$ where  $d(\omega^m_{-r}, f^{kq+a_m-r}(p))< \vp/4, \ldots, d(\omega^m, f^{kq+a_m}(p)) < \vp/4, \ldots, d(f^{b_m - a_m}(\omega^m), f^{kq+b_m}(p))   < \vp/4$. 

We want to prove that the orbit of  $\hat p$, $\vp$-shadows the
specification $\hat S$. From above we obtain that 
$$
\aligned
d(\hat
\omega_i, \hat f^{a_i}(\hat p)) &= d(\omega^i, f^{kq+a_1-r+a_i}(p)) +
\frac{d(\omega^i_{-1}, f^{kq+a_1-r +a_i-1}(p))}{2} + \ldots +
\frac{d(\omega^i_{-r}, f^{kq+a_1-r+a_i-r} (p))}{2^r} + \ldots \\
&< \vp/4 + \vp/8
+ \vp/2^{r+2} + \frac{1}{2^r} < \vp/2 + \vp/2 = \vp,
\endaligned
$$
 which
follows from the way we chose $r$ above, i. e such that $\frac{1}{2^r} <
\vp/2$.
  \ Then we can similarly prove these inequalities up to order $b_i$ when:
$$
\aligned 
d(\hat f^{b_i - a_i} \hat \omega^i, \hat f^{b_i} \hat p) &= d(f^{b_i-a_i}(\omega^i), f^{kq+a_1-r+b_i}(p)) +
  \ldots + \frac{d(f^{b_i - a_i}(\omega^i_{-r}), f^{kq+a_1-r+b_i-r} p)}{2^r} + \ldots \\
&<  \vp/4 + \vp/8 + \ldots + \vp/2^{r+2} +
  \frac{1}{2^r} < \vp
\endaligned
$$
 Since the above estimates can be done for all $i = 1, \ldots, m$ we see that
  $\hat p$, $\vp$-shadows the specification $\hat S$ if $\hat S$ is $\hat M(\vp):= (M(\vp/4) + 2r)$-spaced.

We notice that the integer $r = r(\vp)$ does not depend on the
specification $\hat S$; in conclusion for any $\vp>0$ we found a
positive integer $\hat M(\vp)$ so that any $\hat M(\vp)$-spaced
specification $\hat S$ in $\hat \Lambda$ is $\vp$-shadowed by a
point in $\hat \Lambda$, and for any $q \ge \hat M(\vp) + L(\hat
S)$ there exists a period-$q$ orbit $\vp$-shadowing $\hat S$.

In conclusion  if $f$ has specification property on $\Lambda$, then
also  $\hat f$ has specification property on $\hat \Lambda$ which
finishes the proof of the Theorem.

\end{proof}

A representative agent may want to maximize the average value of
his utility function with respect to a $\hat f$-invariant measure
$\hat \mu$ on $\hat \Lambda$ but \textit{at the same time} to have
as much control on the system as possible in the long run. In
other words a possibility is to maximize the following sum giving
the average value plus the control $h_{\hat \mu}$:
\begin{equation}\label{max}
AC(W) (\hat \mu) = \int_{\hat \Lambda} W d\hat \mu + h_{\hat \mu}
\end{equation}

From the Variational Principle for Topological Pressure (see
\cite{KH} for eg.), we know that $AC(W)(\hat \mu)$ is maximized
for a probability measure called the \textbf{equilibrium measure}
of $W$. If $W$ is Holder continuous and $\hat f$ is expansive then
this measure is unique and will be denoted by $\hat \mu_W$. This
measure has important geometric properties and one can precisely
estimate the measure $\hat \mu_W$ of the \textit{Bowen balls}
$B_n(\hat x, \vp):= \{\hat y \in \hat \Lambda, d(\hat f^i \hat y,
\hat f^i \hat x) < \vp, i = 0, \ldots, n-1\}$ (see for eg. \cite{Bo}, \cite{KH}).

In particular when $W$ is constant, the equilibrium measure of $W$
is the measure of maximal entropy. Equilibrium measures appear
also as \textit{Sinai-Ruelle-Bowen measures} in the case of
hyperbolic attractors (see \cite{ER}, \cite{Y}) which give the
limiting distribution of forward trajectories of Lebesgue-almost
all points in a neighborhood of the attractor. In the case of
\textit{non-invertible hyperbolic repellers} equilibrium measures
of stable potentials appear also as \textit{inverse
Sinai-Ruelle-Bowen measures} (see \cite{M-JSP}), i.e invariant
measures describing the limiting distributions of preimages of
large orders, of Lebesgue almost-all points in a neighbourhood of
the non-invertible repeller.

We have the following Theorem giving the measure of a Bowen ball
$B_n(x, \vp)$ in a metric space (see \cite{KH}); by $S_n\phi(y)$
we denote the \textbf{consecutive sum} $\phi(y) + \phi(f(y)) +
\ldots + \phi(f^{n-1}(y))$.

\begin{unthm}[Bowen's Theorem on Equilibrium Measures.]
Let $(X, d)$ be a compact metric space and $f: X \to X$ an expansive homeomorphism with specification property and
 $\phi: X \to \mathbb R$ a Holder continuous potential on $X$. Then there exists exactly one equilibrium measure for
  $\phi$ and $$\mu_\phi = \mathop{\lim}\limits_{n \to \infty} \frac{1}{\mathop{\sum}\limits_{y \in
  \text{Fix}(f^n)}
   e^{S_n\phi(y)}} \mathop{\sum}\limits_{y \in \text{Fix}(f^n)} e^{S_n\phi(y)} \delta_y$$
Moreover we can estimate the measure $\mu_\phi$ of Bowen balls by:
\begin{equation}\label{Bw}
A_\vp e^{S_n\phi(y)-nP(\phi)} \le \mu_\phi(B_n(y, \vp)) \le B_\vp e^{S_n\phi(y) - nP(\phi)}, \ y \in X, n \ge 1,
\end{equation}
where $A_\vp, B_\vp>0$ are positive constants depending only on $\vp$, and $P(\phi)$ is a number called the topological pressure of $\phi$.
\end{unthm}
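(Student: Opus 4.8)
The plan is to follow Bowen's classical construction of the equilibrium state out of periodic-orbit data, using expansiveness and the specification property as the two engines. First I would fix an expansivity constant $\delta_0$ as in Definition~\ref{expans} and work throughout with $\vp<\delta_0/2$, so that $(n,\vp)$-separated sets and Bowen balls $B_n(x,\vp)$ faithfully record orbit segments. The technical starting point is the \emph{Bowen property} of the potential: since $\phi$ is Holder, there is a constant $V>0$ with $|S_n\phi(x)-S_n\phi(y)|\le V$ whenever $y\in B_n(x,\vp)$, uniformly in $n$; this bounded distortion of consecutive sums along Bowen balls is what survives the passage from combinatorics to measures.

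Second, I would form the partition functions $Z_n=\sum_{y\in\text{Fix}(f^n)}e^{S_n\phi(y)}$ and the measures $\mu_n=Z_n^{-1}\sum_{y\in\text{Fix}(f^n)}e^{S_n\phi(y)}\delta_y$. Expansiveness makes $\text{Fix}(f^n)$ an $(n,\vp)$-separated set, so $\limsup_n\frac1n\log Z_n\le P(\phi)$; the specification property (any $M$-spaced specification is $\vp$-shadowed by a periodic orbit of every sufficiently large period) together with the Bowen property lets one plant enough distinct periodic points near a maximal $(n,\vp)$-separated set to obtain the reverse inequality, so $\frac1n\log Z_n\to P(\phi)$. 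I would then take a weak$^*$ accumulation point $\mu_\phi$ of $(\mu_n)$ --- a priori of its Cesaro averages, although specification in fact forces convergence of the $\mu_n$ themselves --- and check $f$-invariance in the usual way, using that $S_n\phi\circ f-S_n\phi$ is uniformly bounded so that the normalized pushforward of $\mu_n$ differs from $\mu_n$ by an error tending to $0$.

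Third, and this is the heart of the matter, I would establish the \textbf{Gibbs property}: there are constants $0<A_\vp\le B_\vp$ with
$$A_\vp\,e^{S_n\phi(y)-nP(\phi)}\;\le\;\mu_\phi\big(B_n(y,\vp)\big)\;\le\;B_\vp\,e^{S_n\phi(y)-nP(\phi)},\qquad y\in X,\ n\ge1.$$
The upper bound comes from expansiveness plus the Bowen property: for $m$ slightly larger than $n$, $B_n(y,\vp)$ can contain only a bounded number of points of $\text{Fix}(f^m)$, each of weight at most $e^{S_m\phi(\cdot)}\le e^{S_n\phi(y)+V}\,e^{(m-n)\|\phi\|}$, and one divides by $Z_m\asymp e^{mP(\phi)}$. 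The lower bound is exactly where the specification property is indispensable: given $y$ and $n$, for each large $m$ one uses an $(m-n)$-long free block to build many pairwise distinct period-$m$ points whose first $n$ iterates $\vp$-shadow the orbit segment $y,f(y),\dots,f^{n-1}(y)$, their $e^{S_m\phi}$-weights being comparable, via the Bowen property, to $e^{S_n\phi(y)}e^{(m-n)P(\phi)}$; summing these and normalizing by $Z_m$ gives the claimed lower estimate. Granting the Gibbs property, the displayed periodic-orbit limit formula for $\mu_\phi$ and the estimate (\ref{Bw}) are immediate, and combining the two-sided Gibbs bound with the Brin-Katok entropy formula recalled above yields $h_{\mu_\phi}+\int\phi\,d\mu_\phi=P(\phi)$, i.e. $\mu_\phi$ is an equilibrium state.

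Finally, for uniqueness I would show that every equilibrium state $\nu$ for $\phi$ obeys the same two-sided Gibbs inequality on Bowen balls --- the upper bound holds for all invariant measures by a covering argument, and an equilibrium state cannot decay exponentially faster than $e^{S_n\phi-nP(\phi)}$ on the $B_n$'s without violating $h_\nu+\int\phi\,d\nu=P(\phi)$ through Brin-Katok. Hence $\nu$ and $\mu_\phi$ are mutually absolutely continuous with Radon-Nikodym derivative bounded above and below; since specification forces topological mixing and thus ergodicity of the Gibbs measure $\mu_\phi$, the ergodic decomposition of $\nu$ then collapses to $\nu=\mu_\phi$. I expect the lower Gibbs estimate to be the main obstacle: one must coordinate the specification-produced periodic orbits so that they are genuinely distinct and their consecutive-sum weights are controlled uniformly in both $n$ and $m$ --- everything else is bookkeeping with the expansivity constant and the Holder exponent.
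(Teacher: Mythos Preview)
The paper does not prove this statement: it is quoted as Bowen's classical theorem, with references to \cite{Bo} and \cite{KH}, and then used as a black box in the proofs of Theorem~\ref{eq} and Theorem~\ref{approx}. So there is no ``paper's own proof'' to compare against; your sketch is essentially Bowen's original argument and is the standard route.

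One genuine gap worth flagging: your first step asserts that Holder continuity of $\phi$ in the metric $d$ yields the Bowen property $|S_n\phi(x)-S_n\phi(y)|\le V$ for $y\in B_n(x,\vp)$, uniformly in $n$. As written this does not follow: Holder gives only $|\phi(f^ix)-\phi(f^iy)|\le C\,d(f^ix,f^iy)^\alpha\le C\vp^\alpha$, so the consecutive sum is bounded by $nC\vp^\alpha$, which is not uniform. To get a constant $V$ independent of $n$ you need the distances $d(f^ix,f^iy)$ to decay exponentially away from the endpoints of the orbit segment, and expansiveness alone does not provide a rate. In the hyperbolic setting of the paper's applications this exponential contraction is available and the step goes through; in the abstract expansive setting one must either (i) assume the Bowen condition directly as the hypothesis on $\phi$ (this is how \cite{KH}, Theorem 20.3.7, actually states it), or (ii) first pass to an adapted metric of Reddy--Fathi type in which expansiveness does yield exponential estimates, and verify that Holder in $d$ transfers to Holder in the new metric. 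Everything else in your outline---the asymptotics of $Z_n$, the two-sided Gibbs estimate via specification, equilibrium via Brin--Katok, and uniqueness through ergodicity of the Gibbs measure---is correctly organized.
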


Now we notice that in the examples from Section 1 presenting a hyperbolic set, they are formed from non-critical homoclinic orbits to repelling fixed points or from horseshoes without critical points.

\begin{thm}\label{eq}
Consider one of the economic systems from Section 1 given by a
non-invertible map $f$ that has a hyperbolic mixing basic set
$\Lambda$ containing no critical points of $f$. Let also a utility
function $W$ defined on the inverse limit space $\hat \Lambda$ as
in Definition \ref{ut}. Then there exists a unique equilibrium
measure $\hat \mu_W$ of $W$ on $\hat \Lambda$ and for any $\vp>0$
there are positive constants $A_\vp, B_\vp$ so that for any $ \hat
x \in \hat \Lambda, n \ge 1$, $$A_\vp e^{S_nW(\hat x) - nP(W)} \le
\hat \mu_W(B_n(\hat x, \vp)) \le B_\vp e^{S_nW(\hat x)-
nP(W)}$$

\end{thm}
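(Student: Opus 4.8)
The plan is to obtain this as a direct application of Bowen's Theorem on Equilibrium Measures to the triple $(\hat\Lambda,\hat f,W)$, so that the entire argument reduces to verifying three hypotheses: that $\hat\Lambda$ is a compact metric space on which $\hat f$ is a homeomorphism, that $\hat f$ is expansive with the specification property, and that $W$ is a H\"older continuous potential on $(\hat\Lambda,d)$. The first is immediate: $\hat\Lambda$ is a closed subset of the compact product $\Lambda^{\mathbb N}$, it carries the metric of Definition \ref{inverse-limit}, and the inverse-limit shift $\hat f$ is always a homeomorphism (its inverse deletes the zeroth coordinate). The second is exactly Theorem \ref{exp}, applicable because $\Lambda$ is by hypothesis a mixing basic set on which $f$ is hyperbolic; here the assumption that $\Lambda$ contains no critical points of $f$ is precisely what guarantees $\det Df\ne 0$ along $\Lambda$, hence that $f$ is genuinely hyperbolic on $\Lambda$ in the endomorphism sense required by the earlier results.

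The only step with real content is therefore the third: showing $W$ is H\"older on $\hat\Lambda$. First I would check that the local utility $U$ of Definition \ref{ut} is Lipschitz on $\Lambda$. The map $x\mapsto\min\{1,x\}$ is $1$-Lipschitz, and since $\Lambda$ is compact and contained in $(0,\infty)$ (resp.\ in $(0,1)\times(0,1)$ in case (b)), the arguments fed into $t\mapsto t^{1-\sigma}$ and $t\mapsto t^{1-\gamma}$ range over a fixed compact subinterval of $(0,\infty)$, on which these functions are Lipschitz; hence $U$ is Lipschitz on $\Lambda$ with some constant $L$. Then for $\hat x=(x,x_{-1},\dots)$ and $\hat y=(y,y_{-1},\dots)$ in $\hat\Lambda$ with $\delta:=d(\hat x,\hat y)$ one has $d(x_{-i},y_{-i})\le 2^i\delta$, and $|W(\hat x)-W(\hat y)|\le L\sum_{i\ge0}\beta^i d(x_{-i},y_{-i})$. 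If $\beta\le \tfrac12$ this is at most $L\sum_{i\ge0}2^{-i}d(x_{-i},y_{-i})=L\delta$, so $W$ is Lipschitz; if $\tfrac12<\beta<1$ the weights $(2\beta)^i$ grow, so I would split the sum at $N=\lceil\log_2(1/\delta)\rceil$, bounding the tail by $\tfrac{2\|U\|}{1-\beta}\beta^N$ and the head by $L\delta\sum_{i<N}(2\beta)^i\le\tfrac{L}{2\beta-1}\,\delta(2\beta)^N$, and then use $\beta=2^{-\theta}$ with $\theta=\log_2(1/\beta)\in(0,1)$ together with $1/\delta\le 2^N<2/\delta$ to turn both terms into $O(\delta^\theta)$. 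Thus $W$ is H\"older on $\hat\Lambda$ with exponent $\min\{1,\log_2(1/\beta)\}$.

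With these three facts established, Bowen's Theorem on Equilibrium Measures applies verbatim to the compact metric space $\hat\Lambda$, the expansive homeomorphism $\hat f$ with specification, and the H\"older potential $W$, yielding at once the unique equilibrium measure $\hat\mu_W$ of $W$ on $\hat\Lambda$ and the two-sided estimate
$$A_\vp\, e^{S_nW(\hat x)-nP(W)}\le \hat\mu_W(B_n(\hat x,\vp))\le B_\vp\, e^{S_nW(\hat x)-nP(W)},$$
where $S_nW(\hat x)=W(\hat x)+W(\hat f\hat x)+\dots+W(\hat f^{n-1}\hat x)$ and $P(W)$ is the topological pressure of $W$ with respect to $\hat f$; this is exactly the claimed statement. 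I do not expect a serious obstacle here: the genuinely hard analytic input, namely the specification property on the inverse limit, has already been isolated in Theorem \ref{exp}, so the only point needing care in this proof is the comparison of the geometric weights $\beta^i$ with the dyadic weights $2^{-i}$ of the inverse-limit metric, which gives the H\"older regularity of $W$ and is routine.
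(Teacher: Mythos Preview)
Your proposal is correct and follows the same overall architecture as the paper: reduce the theorem to Bowen's result by checking compactness of $\hat\Lambda$, invoking Theorem~\ref{exp} for expansiveness and specification, and verifying that $W$ is H\"older on $\hat\Lambda$.

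The genuine difference is in the H\"older continuity step. The paper's argument is dynamical: it uses the hyperbolic structure on $\Lambda$ (specifically the quantity $\lambda=1/\inf_\Lambda|Df_s|$, finite because $\Lambda$ contains no critical points) to bound how close preimages $x_{-i},y_{-i}$ can spread apart, chooses $\gamma$ so small that $\beta\lambda^\gamma<1$, and then treats separately the case where the two prehistories shadow each other for a long time and the case where they diverge early. Your argument is purely metric: from $d(\hat x,\hat y)=\sum_i 2^{-i}d(x_{-i},y_{-i})$ you extract the trivial bound $d(x_{-i},y_{-i})\le 2^i d(\hat x,\hat y)$ and then balance the geometric weights $\beta^i$ against the dyadic weights $2^{-i}$, giving the explicit exponent $\min\{1,\log_2(1/\beta)\}$. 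This is more elementary and more general---it works for any continuous $f$ on any compact $\Lambda$, without using hyperbolicity or the no-critical-points hypothesis at all for this part of the proof. The paper's route, by contrast, ties the H\"older exponent to the stable contraction rate, which is perhaps more natural dynamically but strictly unnecessary here. Either way the conclusion feeds identically into Bowen's theorem.
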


\begin{proof}
Let us consider the hyperbolic non-invertible map $f$ restricted
to the compact invariant set $\Lambda \subset \mathbb R^2$ having
an inverse limit $\hat \Lambda$, and $W$ as in Definition
\ref{ut} (the same proof works in the 1-dimensional case). The utility function $W$ has an associated discount
factor $\beta \in (0, 1)$.

We will show that $W(\hat x) = \mathop{\sum}\limits_{i \ge 0}
\beta^i U(x_{-i})$ is Holder continuous on the
  metric space $\hat \Lambda$.  Let us notice first that for the utility functions of
Definition
  \ref{ut}, the function $U$ is Holder continuous. So there exists a constant $C>0$ and an exponent
   $\gamma\in (0, 1]$ s. t $|U(x)- U(y)| \le C d(x, y)^\gamma, x, y \in \Lambda$, as the set $\Lambda$ is
   compact.\
But $W(\hat x) = U(x) + \beta U(x_{-1}) + \beta^2 U(x_{-2}) +
\ldots$, so $|W(\hat x) - W(\hat y)| \le |U(x) - U(y)| + \beta
|U(x_{-1}) - U(y_{-1})| + \beta^2 |U(x_{-2}) - U(y_{-2})| +
\ldots, \ \hat x, \hat y \in \hat \Lambda$. From the Holder
condition for $U$ we obtain that $|U(x_{-i}) - U(y_{-i})| \le C
d(x_{-i}, y_{-i})^\gamma, i \ge 0$. Hence
\begin{equation}\label{W}
|W(\hat x) - W(\hat y)| \le C \cdot [d(x, y)^\gamma + \beta d(x_{-1}, y_{-1})^\gamma + \ldots], \hat x, \hat y \in \hat \Lambda
\end{equation}

Without loss of generality assume that $\text{diam}(\Lambda) = 1$.
Let us take now two close points $\hat x, \hat y \in \hat \Lambda,
d(\hat x, \hat y) < \delta < < 1$. Recall that we  have a
hyperbolic structure on $\Lambda$, and denote by $Df_s(x)$ the
restriction of $Df(x)$ to the stable tangent space at $x$. If $x
\ne y$ are close, then we may have some of their preimages of order 1, $x_{-1}$ and
$y_{-1}$ close as well. Denote by $\lambda:= \frac{1}{\inf_\Lambda
|Df_s|}$; then $1 < \lambda < \infty$ since there are no critical
points in $\Lambda$. Assume also that $\gamma>0$ is taken such
that:
\begin{equation}\label{con}
\beta \lambda^\gamma <1
\end{equation}
This is possible if we take $\gamma>0$ small enough, since $\beta
\in (0, 1)$. From the definition of $\lambda$, we know that
$d(x_{-1}, y_{-1}) \le d(x, y) \lambda$ if $x_{-1}, y_{-1}$ are close too. Let us repeat this
procedure with finite sequences of consecutive preimages $x_{-m}, y_{-m}$ until we have $d(x,
y) \lambda^m
>\vp_0$ for some fixed $\vp_0$; i.e $m$ is the first positive
integer satisfying this condition. Then for a choice of $\hat
x, \hat y$ having on the $m$-th positions respectively $x_{-m}, y_{-m}$, we obtain from (\ref{W}): $$|W(\hat x)- W(\hat y)| \le C[d(x,
y)^\gamma + \beta d(x, y)^\gamma \lambda^\gamma + \ldots + \beta^m
d(x, y)^\gamma \lambda^{m\gamma} + \beta^m]$$ We know however that
$m$ is related to $d(x, y)$ and can be expressed in terms of it.
Indeed from the condition on $m$, we have that $m\log \lambda \ge
\log \frac{\vp_0}{d(x, y)}$ and hence $$\beta^m \le C_1 \cdot d(x,
y)^{\rho'},$$ for some constant $\rho' >0$. This together with the
above relation mean that $$|W(\hat x) - W(\hat y)| \le
\frac{C}{1-\beta \lambda^\gamma} d(x, y)^\gamma + C_1 d(x,
y)^{\rho'}$$ So by taking $\rho := \text{min}\{\rho', \gamma\}$ we
obtain that $|W(\hat x) - W(\hat y)| \le C_2 d(x, y)^\rho$. But
$d(\hat x, \hat y) \ge d(x, y)$, therefore we obtain Holder
continuity in this case, namely $|W(\hat x, \hat y) | \le
C_2 d(\hat x, \hat y)^\rho$.

Now assume that $\hat x, \hat y$ are not as above i.e they do not
shadow each other up to order $m$ but instead, for some $1 \le j
\le m$ there is a preimage $y_{-j}$ far from $x_{-j}$, i.e
$d(x_{-j}, y_{-j}) > \vp_0$ (this follows from the fact that there
are no critical points of $f$ in $\Lambda$). Assume that $\kappa$
is the smallest such $j$. Then $$ \aligned |W(\hat x) - W(\hat y)|
& \le C \left[d(x, y)^\gamma + \beta \lambda^\gamma d(x, y) +
\ldots + \beta^\kappa \lambda^{\kappa \gamma} d(x, y)^\gamma +
\beta^\kappa\right] \\
 &\le \frac{C}{1- \beta \lambda^\gamma} d(x,
y)^\gamma + C_1 \beta^\kappa,
\endaligned
$$
 for some constants $C, C_1 >0$.
Assume first that $d(x, y)^\gamma \le \beta^\kappa$; then $|W(\hat
x) - W(\hat y)| \le C_2 \beta^\kappa$. But $d(\hat x, \hat y) \ge
\frac{d(x_{-\kappa}, y_{-\kappa})}{2^\kappa} \ge
\frac{\vp_0}{2^\kappa}$. Hence there is a sufficiently small
positive constant $\rho$ and a constant $C_3>0$ (both independent of $\hat x, \hat y$) such
that $|W(\hat x) - W(\hat y)| \le C_3 d(\hat x, \hat y)^\rho$. Now
if we have the other case, i.e $d(x, y)^\gamma \ge \beta^\kappa$,
then $$|W(\hat x)- W(\hat y)| \le C_2 d(x, y)^\gamma \le C_2
d(\hat x, \hat y)^\gamma$$

Hence we proved that $W$ is Holder continuous on $\hat \Lambda$,
i.e  there are positive constants $C>0, \rho>0$ so that for all
$\hat x, \hat y \in \hat \Lambda$ we have $$|W(\hat x) - W(\hat
y)| \le C d(\hat x, \hat y)^\rho$$
Now we can use Theorem \ref{exp} in order to prove that the
homeomorphism $\hat f$ is expansive and has specification property
on $\hat \Lambda$. Since we showed that $W$ is Holder
continuous on $\hat \Lambda$ it follows that it has a unique
equilibrium measure $\hat \mu_W$ for which we have the estimates
on the measure of Bowen balls from the previous Bowen's Theorem.
Thus for any $\vp>0$ there are positive constants $A_\vp, B_\vp$
so that for $ \hat x \in \hat \Lambda, n \ge 1$, $$A_\vp
e^{S_nW(\hat x) - nP(W)} \le \hat \mu_W(B_n(\hat x, \vp)) \le
B_\vp e^{S_nW(\hat x)- nP(\phi)}$$
\end{proof}

The previous Theorem gives us good estimates for the measure $\hat
\mu_W$ of the set of points whose iterates remain close to the trajectory of a
certain initial condition, up to $n$ consecutive iterates.

We show now that, if we consider the measure of maximal entropy
$\hat \mu_0$ and \textbf{compare} it to the equilibrium measure
$\hat \mu_W$ on $\hat \Lambda$, then the average utility with
respect to $\hat \mu_W$ is bigger than the average utility with
respect to $\hat \mu_0$.

\begin{cor}\label{ut-en}
In the setting of Theorem \ref{eq} consider the measure of maximal
entropy of $\hat f$ on $\hat \Lambda$ and the
 equilibrium measure $\hat \mu_W$ of $W$ on $\hat \Lambda$. Then $$
\int_{\hat \Lambda} W d\hat \mu_W \ge \int_{\hat \Lambda} W d\hat \mu_0$$
\end{cor}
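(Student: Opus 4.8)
The plan is to exploit the variational characterization of the equilibrium measure $\hat\mu_W$. By definition, $\hat\mu_W$ is the measure that maximizes the quantity $AC(W)(\hat\mu) = \int_{\hat\Lambda} W\, d\hat\mu + h_{\hat\mu}$ over all $\hat f$-invariant probability measures $\hat\mu$ on $\hat\Lambda$; this is exactly the Variational Principle for Topological Pressure, which was invoked in equation (\ref{max}) and the paragraph following it. In particular, applying the defining inequality with the competitor $\hat\mu_0$, the measure of maximal entropy, gives
$$
\int_{\hat\Lambda} W\, d\hat\mu_W + h_{\hat\mu_W} \;\ge\; \int_{\hat\Lambda} W\, d\hat\mu_0 + h_{\hat\mu_0}.
$$

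First I would note that $h_{\hat\mu_0} = h_{top}(\hat f|_{\hat\Lambda})$, since $\hat\mu_0$ is by definition the measure of maximal entropy, and that $h_{top}(\hat f|_{\hat\Lambda})$ is finite (indeed equal to $h_{top}(f|_\Lambda)$, as remarked after Theorem \ref{perturbation}, and this is finite since $f$ is conjugate to a subshift of finite type on $\Lambda$). Next, since $h_{\hat\mu_W} \le h_{top}(\hat f|_{\hat\Lambda}) = h_{\hat\mu_0}$ by the variational principle for entropy, we may rearrange the displayed inequality to obtain
$$
\int_{\hat\Lambda} W\, d\hat\mu_W \;\ge\; \int_{\hat\Lambda} W\, d\hat\mu_0 + \big(h_{\hat\mu_0} - h_{\hat\mu_W}\big) \;\ge\; \int_{\hat\Lambda} W\, d\hat\mu_0,
$$
which is exactly the claimed inequality. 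The key algebraic point is simply that the entropy term we subtract from the left side is no larger than the entropy term we subtract from the right side, so the comparison of the $\int W$ terms goes the right way.

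I do not anticipate a serious obstacle here; the only things to be careful about are (i) confirming that $\hat f$ is expansive with specification (hence the equilibrium measure $\hat\mu_W$ of the Hölder potential $W$ exists and is unique), which is furnished by Theorem \ref{exp} and the Hölder continuity of $W$ established in the proof of Theorem \ref{eq}; and (ii) ensuring the entropies are finite so the rearrangement of the inequality is legitimate — this holds because $h_{top}(\hat f|_{\hat\Lambda}) < \infty$. One should also observe that when $W$ is constant the two measures coincide and equality holds, which is consistent with the statement. A minor remark worth including: the same argument shows more generally that among all invariant measures, $\hat\mu_W$ has average utility at least that of any measure whose entropy is at least $h_{\hat\mu_W}$; the case of $\hat\mu_0$ is the extreme instance since it has the largest possible entropy.
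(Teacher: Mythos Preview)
Your proof is correct and follows essentially the same argument as the paper: apply the Variational Principle for pressure to compare $\hat\mu_W$ against $\hat\mu_0$, then use $h_{\hat\mu_W}\le h_{top}(\hat f)=h_{\hat\mu_0}$ to cancel the entropy terms in the right direction. The additional remarks you make about finiteness of entropy and existence/uniqueness of $\hat\mu_W$ are appropriate caveats, but the core logic is identical to the paper's proof.
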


\begin{proof}
From the Variational Principle for topological pressure we know
that $\sup \{h_\nu + \int_{\hat \Lambda} W d\nu, \ \nu \ \hat
f-\text{invariant probability on} \ \hat \Lambda\} = P(W) = h_{\hat
\mu_W} + \int_{\hat \Lambda} W d\hat \mu_W$. Hence since $h_{\hat \mu_0} = h_{top}(\hat f)$ we obtain $$ \int_{\hat
\Lambda} W d\hat \mu_0 + h_{top}(\hat f) \le \int_{\hat \Lambda} W
d\hat \mu_W + h_{\hat \mu_W}$$ Then since $h_{top}(\hat f) \ge h_{\hat \mu_W}$
from the Variational Principle for Entropy (see \cite{KH}), we
obtain the conclusion of the Corollary.

\end{proof}

Given the specific form of our utility function, we can
approximate $\hat \mu_W$ with equilibrium states of simpler
functions. Consider $W_n(\hat x) = \mathop{\sum}\limits_{0 \le i
\le n} \beta^i U(x_{-i}),  \ \hat x \in \hat \Lambda, \ \text{for} \n \ge 1$. Similarly as in the proof
of Theorem \ref{eq} we can show that $W_n$ is a Holder function on
$\hat \Lambda$, hence it has an equilibrium state $\hat \mu_{W_n}$
on $\hat \Lambda$.

\begin{thm}\label{approx}
In the setting of Theorem \ref{eq}, let a utility function $W$ on $\hat \Lambda$ and the functions $W_n, n \ge 1$ as above. Then the average value of the utility function with respect to $\hat \mu_W$ can be approximated with those of $W_n$, i.e
$$
|\int_{\hat \Lambda} W d\hat \mu_W - \int_{\hat\Lambda}W_n d\hat \mu_{W_n}| \mathop{\to}\limits_{n \to \infty} 0
$$
\end{thm}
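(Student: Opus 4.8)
The plan is to split the difference into two pieces using a triangle inequality:
\[
\left|\int_{\hat\Lambda} W\, d\hat\mu_W - \int_{\hat\Lambda} W_n\, d\hat\mu_{W_n}\right|
\le \left|\int_{\hat\Lambda} W\, d\hat\mu_W - \int_{\hat\Lambda} W_n\, d\hat\mu_W\right|
+ \left|\int_{\hat\Lambda} W_n\, d\hat\mu_W - \int_{\hat\Lambda} W_n\, d\hat\mu_{W_n}\right|.
\]
The first term is controlled by the uniform estimate already used in the proof of Theorem \ref{inv}: since $\|W - W_n\|_\infty \le C\beta^{n+1}/(1-\beta)$, the first term is at most $C\beta^{n+1}/(1-\beta)$, which tends to $0$. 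So the whole problem reduces to showing that $\int W_n\, d\hat\mu_{W_n} - \int W_n\, d\hat\mu_W \to 0$, i.e. that the equilibrium measures $\hat\mu_{W_n}$ of the truncated potentials do not "evaluate $W_n$ much differently" than $\hat\mu_W$ does.

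To handle this, I would compare $\hat\mu_{W_n}$ and $\hat\mu_W$ via the continuity of the equilibrium-state map in the uniform topology. First, each $W_n$ is Hölder on $\hat\Lambda$ (same argument as in Theorem \ref{eq}, in fact easier since the sum is finite), so $\hat\mu_{W_n}$ exists and is unique, and $\hat f$ is expansive with specification by Theorem \ref{exp}. The key input is the standard fact (Bowen, Walters; see \cite{Bo}, \cite{KH}) that for an expansive map with specification the map $\phi \mapsto \mu_\phi$ from Hölder potentials (with the sup-norm) to invariant probability measures (with the weak$^*$ topology) is continuous; more precisely, if $\|\psi_k - \phi\|_\infty \to 0$ with all potentials having equilibrium states, then $\mu_{\psi_k} \to \mu_\phi$ weak$^*$. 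Applying this with $\psi_k = W_k$ and $\phi = W$ — legitimate since $\|W_k - W\|_\infty \to 0$ — gives $\hat\mu_{W_n} \to \hat\mu_W$ weak$^*$. Then, writing
\[
\int W_n\, d\hat\mu_{W_n} - \int W\, d\hat\mu_W
= \int (W_n - W)\, d\hat\mu_{W_n} + \left(\int W\, d\hat\mu_{W_n} - \int W\, d\hat\mu_W\right),
\]
the first term is bounded by $\|W_n - W\|_\infty \to 0$ and the second term tends to $0$ by weak$^*$ convergence applied to the fixed continuous function $W$. Combining with the reduction above finishes the proof.

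The main obstacle is justifying the continuity of the equilibrium-state map in the appropriate sense. One clean route avoiding heavy machinery: use uniqueness of the equilibrium state together with upper semicontinuity of entropy (which holds because $\hat f$ is expansive). Indeed, if $\hat\nu$ is any weak$^*$ accumulation point of $(\hat\mu_{W_n})$, then from $h_{\hat\mu_{W_n}} + \int W_n\, d\hat\mu_{W_n} = P(W_n)$, passing to the limit along a subsequence, using $P(W_n) \to P(W)$ (pressure is $\|\cdot\|_\infty$-Lipschitz), $\int W_n\, d\hat\mu_{W_n} \to \int W\, d\hat\nu$ (combining $\|W_n - W\|_\infty \to 0$ with weak$^*$ convergence), and upper semicontinuity $\limsup h_{\hat\mu_{W_n}} \le h_{\hat\nu}$, we get $h_{\hat\nu} + \int W\, d\hat\nu \ge P(W)$, so $\hat\nu = \hat\mu_W$ by uniqueness. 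Since every subsequence has $\hat\mu_W$ as its only accumulation point, $\hat\mu_{W_n} \to \hat\mu_W$ weak$^*$. This makes the argument self-contained given the tools already assembled (expansiveness and specification from Theorem \ref{exp}, Hölder continuity of $W$ and $W_n$ from the method of Theorem \ref{eq}, and the variational principle as quoted in Corollary \ref{ut-en}).
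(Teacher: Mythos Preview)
Your proof is correct and follows essentially the same route as the paper: a triangle-inequality split into a uniform-approximation piece (controlled by $\|W-W_n\|_\infty$) and a weak-convergence piece (controlled by $\hat\mu_{W_n}\to\hat\mu_W$). The only real difference is in how the weak convergence is justified: the paper invokes Bowen's periodic-orbit formula together with the stronger fact that $n\|W-W_n\|_\infty\to 0$ (which holds since the tail is $O(\beta^n)$), whereas you give the more abstract argument via upper semicontinuity of entropy for expansive maps, Lipschitz continuity of pressure, and uniqueness of the equilibrium state; your route uses only $\|W-W_n\|_\infty\to 0$ and is thus slightly more general, at the cost of invoking a bit more thermodynamic formalism.
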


\begin{proof}
From Bowen's Theorem applied to equilibrium measures on $\hat \Lambda$ we have that $$\hat
\mu_\phi = \mathop{\lim}\limits_{n \to \infty}
\frac{1}{\mathop{\sum}\limits_{\hat x \in \text{Fix}(\hat f^n)}e^{S_n\phi(\hat x)}}
\mathop{\sum}\limits_{ \hat x \in \text{Fix}(\hat f^n)}
e^{S_n\phi(\hat x)} \delta_{\hat x},$$ for any Holder continuous
potential $\phi$ on $\hat \Lambda$. Hence since $$||W-W_n|| \le
\frac{\beta^n}{1-\beta} \sup_\Lambda |U|,$$ it follows that $n
\cdot |W-W_n|$ converges uniformly to $0$ and thus $\hat \mu_{W_n}
\to \hat \mu_W$ weakly. Hence $$ \aligned |\int W d\hat \mu_W -
\int W_n d\hat \mu_{W_n}| & \le |\int W d\hat \mu_W - \int W d\hat
\mu_{W_n}| + | \int W d\hat \mu_{W_n} - \int W_n d\hat \mu_{W_n}|
\\ &\le |\int W d\hat \mu_W - \int W d\hat \mu_{W_n}| + \frac{\beta^n}{1-\beta}
\cdot \sup_\Lambda |U|,\endaligned$$ since $||W- W_n|| \le
\frac{\beta^n}{1-\beta} \sup_\Lambda |U|$ and since $\hat
\mu_{W_n}$ is a probability measure. So from the weak
convergence of $\hat \mu_{W_n}$ towards $\hat \mu_W$, we obtain
the conclusion of the Theorem.

\end{proof}

\textbf{Acknowledgements:} This paper is suported by the Sectorial
Operational Programme Human Resources Development (SOP HRD),
financed from the European Social Fund and by the Romanian
Government under the contract number SOP HRD/89/1.5/S/62988.

\

\textbf{E-mail:}  Eugen.Mihailescu\@@imar.ro

Webpage: www.imar.ro/$\sim$mihailes

Institute of Mathematics ``Simion Stoilow`` of the Romanian
Academy, P.O. Box 1-764, RO 014700, Bucharest, Romania.

\end{document}